\documentclass[12pt,twoside,final]{article}

\usepackage{chngpage}
\usepackage[a4paper,left=22mm,right=22mm,top=30mm,bottom=30mm]{geometry}
\usepackage{imakeidx}
\usepackage{xr-hyper}
\usepackage{xr}
\usepackage[all]{xy}
\usepackage[center]{titlesec}
\titleformat*{\section}{\large\bfseries}
\usepackage{enumerate,cite}
\usepackage[latin1]{inputenc}
\usepackage{amssymb}
\usepackage{amsthm}
\usepackage{amsmath}
\usepackage{amsfonts}
\usepackage{mathrsfs}
\usepackage{hyperref}
\usepackage{color}
\usepackage{graphicx}
\usepackage{setspace}
\usepackage{bm}
\usepackage{enumitem}
\usepackage{booktabs}

\usepackage{textcomp}
\usepackage{tikz}
\usetikzlibrary{matrix,chains}
\usepackage[center]{titlesec}
\titleformat*{\section}{\normalsize\bfseries}
\usepackage{indentfirst}

\usepackage{amscd}

\newcommand{\Aut}{\operatorname{Aut}\nolimits}
\newcommand{\Out}{\operatorname{Out}\nolimits}

\newcommand{\IBr}{\operatorname{IBr}\nolimits}

\newcommand{\Irr}{\operatorname{Irr}\nolimits}

\newcommand{\Bl}{\operatorname{Bl}}

\newcommand{\Z}{\mathrm{Z}}

\newcommand{\C}{\mathrm{C}}
\newcommand{\Ker}{\mathrm{Ker}}

\newcommand{\OO}{\mathrm{O}}

\newcommand{\tbG}{\widetilde{\mathbf{G}}}

\theoremstyle{remark}

\theoremstyle{definition}

\theoremstyle{plain}
\newtheorem{thm}{Theorem}[section]

\newtheorem{lem}[thm]{Lemma}

\newtheorem{cor}[thm]{Corollary}

\newtheorem{prop}[thm]{Proposition}

\newtheorem{order}[thm]{}

\newtheorem{conj*}{Conjecture}
\numberwithin{equation}{thm}

\usepackage{lastpage}
\usepackage{fancyhdr}

\begin{document}

\begin{center}{\Large\bf Inertial 2-blocks with abelian defect groups
}

\bigskip{\large Kun Zhang and Yuanyang
Zhou}

\bigskip{\scriptsize Faculty of Mathematics and Statistics,
Hubei University, Wuhan, China }

\smallskip{\scriptsize School of Mathematics and
Statistics, Central China Normal University,
Wuhan, 430079, P.R. China}
\end{center}

{\noindent\small{\bf Abstract} L. Puig defined inertial blocks.
In this paper, we prove that 2-blocks with defect group $C_{2^{n_1}}\times C_{2^{n_2}}\times...\times C_{2^{n_t}}$ are inertial, where $n_i\geq 2$ for all $i$.
 }

\medskip\noindent{\small{{{\bf Keywords} Finite group;
inertial block; abelian defect 2-group
}
 }}

\section  {Introduction}

Let ${\cal O}$ be a complete discrete valuation ring  with  field of fractions $\cal K$ of characteristic $0$ and with
an algebraically closed residue field $k$ of characteristic $p$.
We assume that $\cal K$ is large enough for all finite groups considered below. Let $G$ be a finite group
and let $b$ be a $p$-block (or a block) of $G$
over ${\cal O}$, a central primitive idempotent of the group algebra ${\cal O}G$.
Let $P$ be a defect group of $b$ and $b_0$ the Brauer correspondent of the block $b$ in ${\rm N}_G(P)$.

A Morita equivalence between block algebras is called {\it basic} if it is induced by an
endopermutation source bimodule (see \cite{P0}).
Following \cite{P3}, the block $b$ is called {\it inertial} if the block algebras ${\cal O}Gb$ and ${\cal O}{\rm N}_G(P)b_0$ are
basically Morita equivalent.
By \cite[Corollary 1.5(c)]{H}, a block is inertial if and only if its block algebra is basically Morita equivalent to the block algebra of a block with a normal defect group.
For example, blocks of $p$-solvable groups with abelian defect groups are inertial.

2-blocks of finite quasi-simple groups with abelian defect groups has been classified (see \cite{EKK} of Theorem \ref{2-block-abel1} below).
Since then, much work on 2-blocks with abelian defect groups has been done (see \cite{EKK, EL1, Wu}).
For any positive integer $n$, let $C_{n}$ be the cyclic group of order $n$.
According to the classification,
2-blocks of finite quasi-simple groups with defect group isomorphic to $C_{2^{n_1}}\times C_{2^{n_2}}\times...\times C_{2^{n_t}}$, where $n_i\geq 2$ for all $i$, are inertial.
In this paper, we generalize this observation to all finite groups.

\begin{thm}\label{Main}
Let $G$ a finite group and let $b$ a 2-block of $G$.
If defect groups of $b$ are isomorphic to $C_{2^{n_1}}\times C_{2^{n_2}}\times...\times C_{2^{n_t}}$, where $n_i\geq 2$ for all $i$, then $b$ is inertial.
\end{thm}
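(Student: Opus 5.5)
We argue by induction on $|G|$, taking a minimal counterexample $(G,b)$. Let $P\cong C_{2^{n_1}}\times\cdots\times C_{2^{n_t}}$ with all $n_i\geq 2$. The first step records the key group-theoretic feature of $P$. The automorphism group of such a $P$ acts on $\Omega_1(P)$ and on $P/\Phi(P)$ through a group whose kernel is a $2$-group. Hence any odd-order automorphism acts faithfully on $\Omega_1(P)$. The hypothesis $n_i\geq 2$ is what is needed here: it excludes factors $C_2$, and with them the configurations $C_2^2\rtimes C_3$ arising from $\mathrm{SL}_2(4)\cong A_5$, $J_1$, ${}^2G_2(q)$ and similar groups. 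This matches the list in Theorem \ref{2-block-abel1} (the Eaton--Kessar--K\"ulshammer--Sambale classification for quasi-simple groups). From that list we extract that every $2$-block of a quasi-simple group with defect group of this type is either nilpotent or Morita equivalent to its Brauer correspondent via a basic equivalence, i.e.\ inertial.

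The inductive reduction follows the now-standard Fong--K\"ulshammer--Puig machinery, and we need it to preserve both the defect group type and inertiality. First, by Fong reduction (Harris--Linckelmann / Puig's version for basic Morita equivalences), we may assume $b$ covers a $G$-stable block of every normal subgroup $N$. Second, by K\"ulshammer--Puig extensions of nilpotent blocks, we may assume $b$ covers no nilpotent block of a proper normal subgroup unless that subgroup is central. Puig has shown that basic Morita equivalences and inertiality lift through such extensions; we use this with the caveat that the inertial quotient must be odd order, which holds since $\mathrm{Aut}(P)$-elements of odd order are controlled as above. Third, by Watanabe/K\"ulshammer-type arguments and \cite[Corollary 1.5(c)]{H}, we may assume $\mathrm{O}_2(G)\leq \Z(G)$ and $\mathrm{O}_{2'}(G)\leq \Z(G)$. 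This leaves $F^*(G)=E(G)\Z(G)$, with $b$ covering a block $e$ of $E(G)=L_1\cdots L_m$, where the $L_j$ are quasi-simple components.

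The block $e$ is a tensor product of blocks $e_j$ of the $L_j$. Each $e_j$ has a defect group $P\cap L_j$, and this is a direct factor-type subgroup. The main obstacle is verifying that each $P\cap L_j$ still has all cyclic factors of order $\geq 4$, so that the quasi-simple result applies. I would approach this using the first step: if some $P\cap L_j$ had a $C_2$ factor, the classification forces $e_j$ to be non-nilpotent only in the excluded $C_2^r$-type cases, and these would contribute an odd-order automorphism of $P$ acting trivially on $\Omega_1(P)$ or not extending, which is a contradiction. Otherwise $e_j$ is nilpotent and is absorbed by the second reduction step. With that settled, each $e_j$ is inertial, so $e$ is inertial via a $G$-stable basic Morita equivalence. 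A final application of Puig's gluing result for basic Morita equivalences along $G/E(G)\Z(G)$, which has odd order relative to $P$ by minimality, yields that $b$ is inertial. This contradicts the choice of counterexample.
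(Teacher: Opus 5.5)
Your proposal has a genuine gap at the quasi-simple step. You plan to show that each $P\cap L_j$ again has all cyclic factors of order $\geq 4$, using the dichotomy ``$e_j$ is nilpotent or lies in an excluded $C_2^r$-case''. That dichotomy is false. Case (iv) of Theorem~\ref{2-block-abel1} consists of nilpotent covered blocks, which need not be nilpotent, and the classification does not restrict their defect groups. So nothing removes $C_2$ factors from $P\cap L_j$. Your contradiction via odd-order automorphisms acting trivially on $\Omega_1(P)$ also has no content, for two reasons. Faithfulness of odd-order automorphisms on $\Omega_1$ holds for every abelian $2$-group, so $n_i\geq 2$ plays no role there. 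And the inertial quotient of $e_j$ acts on $P\cap L_j$, not on $P$.

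The paper never needs the shape of $P\cap X_i$. It works with the block $\tilde c$ of $PX_i$, whose defect group is all of $P$:
\begin{itemize}
\item By Watanabe, the hyperfocal subgroup $Q$ satisfies $P=Q\times \C_P({\rm N}(P,e))$, so $Q$ is a direct factor of $P$ and inherits $n_i\geq 2$.
\item By Eaton--Livesey, $Q$ is also the hyperfocal subgroup of the component block $c$.
\item In cases (i)--(iii) the hyperfocal subgroup is elementary abelian, so $c$ must be nilpotent covered.
\end{itemize}
Even this only starts the work, because one must show that $\tilde c$, not $c$, is inertial. For sporadic, alternating and defining-characteristic groups, the paper gets this from Malle--Navarro--Sp\"ath. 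For Lie type in odd characteristic, it uses a regular embedding, the fact that $b$ or $\tilde b$ is nilpotent, and Puig's theorem on blocks covered by nilpotent blocks, applied to an auxiliary group $\mathcal L$. None of this appears in your proposal.

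Your gluing step fails for a related reason. You assume $G/{\rm E}(G)\Z(G)$ is ``of odd order relative to $P$'', which essentially means $P\le {\rm F}^*(G)$. For a reduced block, Theorem~\ref{Reduced-block} only gives ${\rm F}^*(G)\le M\le L=PM$ with $M/{\rm F}^*(G)$ and $G/L$ of odd order. The image of $P$ in ${\rm F}^*(G){\rm C}_G(P\cap {\rm F}^*(G))/{\rm F}^*(G)\hookrightarrow\prod_i\Out(S_i)$ is a Sylow $2$-subgroup and can be nontrivial. In other words, $P$ may act on the components by outer (diagonal or field) automorphisms.

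There is no general result that transfers inertiality up extensions of $2$-power index. So inertiality of the blocks $e_j$ of the components $L_j$ is not enough. This is exactly why the paper does three things in order:
\begin{itemize}
\item it proves inertiality for blocks of $PX_i$ (Proposition~\ref{Hy-PH});
\item it glues these by embedding $P{\rm E}(G)$ into a quotient of $\prod_i PX_i/\OO_2(PX_i)$ (Lemma~\ref{p-group-extension}, Proposition~\ref{Inertial-PE(G)});
\item only then does it climb the odd-index steps $P{\rm E}(G)\le L\le G$.
\end{itemize}
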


\section{Reduced blocks with abelian defect groups}

Let $G$ be a finite group and $b$ a block of $G$. The block $b$ of $G$ is called {\it quasi-primitive} if every block of every normal subgroup of $G$ covered by $b$ is $G$-stable. In this case, $b$ covers a unique block of $K$ for any normal subgroup $K$ of $G$, denoted by $b_K$.
The block $b$ is called \textit{reduced} (see \cite[Proposition 6.1]{A}) if
$b$ is quasi-primitive and for any $N \unlhd G$ such that $b$ covers a nilpotent block of $N$,
$N \leq \Z(G){\rm O}_p(G)$ and
${\rm O}_{p'}(N) \leq \Z(G) \cap [G, G]$. {\rm
Denoted by ${\rm E}(G)$ the layer of $G$, by ${\rm F}(G)$ the Fitting subgroup of $G$, and by
${\rm F}^*(G)$ the generalized Fitting subgroup of $G$.

\begin{thm}\label{Reduced-block}
Let $b$ be a block of $G$ with defect group $P$.
If $b$ is reduced and $P$ is abelian, then one of the following holds.

\smallskip\noindent{\bf 1.} $P$ is normal in $G$.

\smallskip\noindent{\bf 2.}  The layer ${\rm E}(G)$ of $G$ is nontrivial and there exist normal subgroups $M$ and $L$ of $G$
    such that ${\rm F}^*(G) \leq M \leq L$, $G/L$ and $M/{\rm F}^*(G)$ are both $p'$-groups, $L = PM$ and each component of $G$
    is normal in $L$.

\end{thm}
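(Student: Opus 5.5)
Throughout, $b$ is reduced with abelian defect group $P$. I would analyse the generalized Fitting subgroup ${\rm F}^*(G)={\rm F}(G){\rm E}(G)$.

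\textbf{Step 1: the Fitting part.} Let $N\unlhd G$ be nilpotent. Since $b$ is quasi-primitive, $b$ covers a unique block $b_N$ of $N$, and $b_N$ is nilpotent because every block of a nilpotent group is nilpotent. The reducedness hypothesis then gives $N\le \Z(G){\rm O}_p(G)$, and hence ${\rm F}(G)=\Z(G){\rm O}_p(G)$. Recall also that ${\rm O}_p(G)$ lies in every defect group of $b$, so ${\rm O}_p(G)\le P$.

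\textbf{Step 2: the case ${\rm E}(G)=1$.} Here ${\rm F}^*(G)={\rm F}(G)=\Z(G){\rm O}_p(G)$, and $\C_G({\rm F}^*(G))\le {\rm F}^*(G)$. Because $\Z(G)$ is central, $\C_G({\rm F}^*(G))=\C_G({\rm O}_p(G))$. As $P$ is abelian and contains ${\rm O}_p(G)$, we get $P\le \C_G({\rm O}_p(G))\le \Z(G){\rm O}_p(G)$. Since $P$ is a Sylow $p$-subgroup of $P\,\Z(G){\rm O}_p(G)$, it follows that $P={\rm O}_p(G)\Z(G)_p$, which is normal in $G$. This gives case 1.

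\textbf{Step 3: the case ${\rm E}(G)\ne 1$.} The group $G$ permutes its components by conjugation. Set
\[
L=\bigcap_{\text{components } X}{\rm N}_G(X),
\]
the kernel of this permutation action. Then $L\unlhd G$, each component is normal in $L$, and ${\rm F}^*(G)\le L$.

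\emph{The quotient $G/L$ is a $p'$-group.} This is the expected main obstacle. My plan is to show that $P$, and in fact every $p$-element of $G$, normalizes each component.
\begin{itemize}
\item Suppose some $p$-element $x$ moves a component $X$. Then the orbit of $X$ generates a normal subgroup $Y=X_1\cdots X_r$, and $b$ covers the $G$-stable block $b_Y$.
\item Each defect group of $b_Y$ is $P\cap Y$. Writing it over the central product, it contains pieces $D_i\le X_i$ permuted by $x$.
\item Because $P$ is abelian and $x\in P$ (after conjugation), $x$ must centralize $P\cap Y$. A nontrivial $p$-element permuting the factors cannot centralize a product of non-central $p$-subgroups of the factors. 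Hence the defect group of $b_{X_i}$ would be central in $X_i$.
\item In that case $b_{X_i}$ is nilpotent with central defect group. Then $b$ covers the nilpotent block $b_Y$ of $Y$, and reducedness forces $Y\le \Z(G){\rm O}_p(G)$. This contradicts $Y$ being a nontrivial product of components.
\end{itemize}
So $P\le L$. Every $p$-element of $G$ would normalize each component if I could run the same argument with an arbitrary Sylow subgroup; I expect instead to use that $P$ contains a Sylow $p$-subgroup of ${\rm C}_G(\cdots)$, or simply to replace $G/L$ by its image and argue via $p'$-index. This is the step needing care.

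\textbf{Step 4: constructing $M$.} Set $M={\rm F}^*(G)\,{\rm O}^{p}(L)$, or more precisely choose $M\unlhd G$ with ${\rm F}^*(G)\le M\le L$ and $L=PM$. To see that $M/{\rm F}^*(G)$ can be taken to be a $p'$-group, use that $L/{\rm F}^*(G)$ embeds in $\prod_X \Out(X)$ times automorphisms of ${\rm F}(G)$. The $p$-part of this quotient should come from $P$, by the same abelian-defect argument applied to outer automorphisms of the components: a $p$-element of $L$ not in $P\,{\rm F}^*(G)$ would centralize $P$ modulo components and contradict the fact that ${\rm C}_G(P)$ controls the relevant part. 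Then take $M$ to be the preimage of ${\rm O}_{p'}(L/{\rm F}^*(G))$, and check that $L=PM$.
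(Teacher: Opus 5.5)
Your Steps 1 and 2 are correct and match the paper's treatment of the case ${\rm E}(G)=1$. Your orbit argument in Step 3 correctly shows that $P$ normalizes every component; it is the same ``$D\cap X_i\le X_i\cap X_i^x$ is central'' argument the paper uses.

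\textbf{First gap: $G/L$ is not shown to be a $p'$-group.} Knowing $P\le L$ is not enough. $P$ is only a defect group, so an arbitrary $p$-element of $G$ need not be conjugate into $P$, and your argument says nothing about such elements. The missing idea is as follows.
\begin{enumerate}
\item Your last bullet already shows that no component carries a covered block with central defect group. Hence your centralization argument applies verbatim to every $x\in\C_G(D)$ with $D=P\cap{\rm F}^*(G)$, not just to $x\in P$.
\item Therefore $K={\rm F}^*(G)\C_G(D)$, which contains $\C_G(P)$, normalizes every component.
\item Since $\C_G(P)\le K$, you can take a maximal Brauer pair $(P,e_P)$ common to $b$ and $b_K$. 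The Frattini argument then gives $G=K\,{\rm N}_G(P,e_P)$.
\item So $G/K$ is a quotient of the $p'$-group ${\rm N}_G(P,e_P)/\C_G(P)$.
\end{enumerate}
This Brauer-pair Frattini step is how the paper obtains the $p'$-index. Conjugating $p$-elements into $P$ cannot work.

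\textbf{Second gap: Step 4 is not an argument, and your choice of $L$ is wrong in general.} Having $L=PM$ with $M/{\rm F}^*(G)$ a $p'$-group forces $L/{\rm F}^*(G)={\rm O}_{p',p}(L/{\rm F}^*(G))$. The full kernel of the permutation action need not have this shape:
\begin{itemize}
\item it may act nontrivially on ${\rm O}_p(G)$, so $L/{\rm F}^*(G)$ need not even be solvable;
\item a priori its outer action on the components can have a $p'$-layer above the image of $P$, in which case no such $M$ exists.
\end{itemize}
The paper handles this as follows.
\begin{enumerate}
\item It works inside $K$, which centralizes $D\ge{\rm O}_p(G)$. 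Hence $K/{\rm F}^*(G)$ embeds in $\prod_i\Out(S_i)$ with $S_i=X_i/\Z(X_i)$, and this is solvable by the Schreier conjecture.
\item It uses reducedness, via the strategy of [Ar, Lemma 2.4], to show that the image of $P$ is a Sylow $p$-subgroup of $K/{\rm F}^*(G)$.
\item An abelian Sylow $p$-subgroup in a $p$-solvable group gives $p$-length one, so $K/{\rm F}^*(G)={\rm O}_{p',p,p'}(K/{\rm F}^*(G))$.
\item It takes $M$ and $L$ to be the preimages of ${\rm O}_{p'}$ and ${\rm O}_{p',p}$. Thus $L$ is in general a proper subgroup of your kernel, and $G/L$ is a $p'$-group because both $G/K$ and $K/L$ are.
\end{enumerate}
Your heuristic that ``$\C_G(P)$ controls the relevant part'' does not replace the solvability, the Sylow property of the image of $P$ (which genuinely needs reducedness), or the $p$-length-one step.
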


\begin{proof}
Suppose that ${\rm E}(G)$ is trivial. We have ${\rm F}^*(G)={\rm F}(G)$. According to the assumptions, we have ${\rm O}_{p'}(G) \leq \Z(G)$, ${\rm F}^*(G) = {\rm O}_p(G)\Z(G)$, $P\subseteq\C_G({\rm F}^*(G)) \subseteq {\rm F}^*(G)$ and $P={\rm O}_p(G)$.

Suppose that ${\rm E}(G)$ is nontrivial.
Let $X_1, \cdots, X_r$ be all distinct components of $G$. For each $i$, let $d_i$ be the block of $X_i$ covered by $b_{{\rm F}^*(G)}$.
By \cite[Theorem 9.26]{N1}, $D = P \cap {\rm F}^*(G)$ is a defect group of $b_{{\rm F}^*(G)}$ and $D \cap X_i$ is a defect group of $d_i$.

We claim that ${\rm O}_p(X_i)$ is properly contained in $D \cap X_i$ for all $i$. Otherwise, there exists some $d_s$ with central defect group.
We take the maximal subset $I$ of $\{1, 2, \cdots, r\}$ such that for each $i\in I$, $d_i$ has central defect group.
Denote by ${\rm E}_1(G)$ the product of all such $X_i$ with $i \in I$.
Obviously ${\rm E}_1(G)$ is normal in $G$.
By \cite[Corollary 2.9]{MNST}, $b_{{\rm E}_1(G)}$ has central defect group.
Since $b$ is reduced, we have ${\rm E}_1(G) \leq {\rm Z}(G){\rm O}_p(G)$, which contradicts with $I$ being non-empty.

Set $K = {\rm F}^*(G)\C_G(D)$ and take a maximal $b_{{\rm F}^*(G)}$-Brauer pair $(D, e_D)$.
Since $b_{{\rm F}^*(G)}$ is $G$-invariant, by the Frattini argument, we have $G = {\rm F}^*(G){\rm N}_G(D, e_D)$, by which, we prove that $K$ is normal in $G$.
We have ${\rm O}_{p'}(K) = {\rm O}_{p'}(G) \leq \Z(G)$ and ${\rm O}_p(K) = {\rm O}_p(G) \leq \Z(G)$.
By \cite[Problem 9A.1]{I}, we have ${\rm F}^*(K) = {\rm F}^*(G)$.

We claim that every $X_i$ is normal in $K$.
For any $x \in \C_G(D)$ and any $i$, we have $D \cap X_i = (D \cap X_i)^x = D \cap X_i^x$. This forces $X_i^x = X_i$ for any $i$. Otherwise, $D \cap X_i$ has to be central in $X_i$; this contradicts with ${\rm O}_p(X_i)$ being properly contained in $D \cap X_i$. The claim is done.

The group $K$ satisfies conditions (g1) and (g3) in \cite[Definition 1.9]{D}, but the proof of \cite[Lemma 1.10(ii)]{D} does not rely on condition (g2) in \cite[Definition 1.9]{D}. Setting $S_i=X_i/Z(X_i)$, we use the proof of \cite[Lemma 1.10(ii)]{D} to prove that the conjugation in $K$ yields an embedding $$\rho:K/{\rm F}^*(G) \hookrightarrow \Out(S_1) \times \Out(S_2) \times \cdots \times \Out(S_r).$$
All $\Out(S_i)$ are solvable and thus $K/{\rm F}^*(G)$ is solvable. Obviously $P$ is a defect group of $b_K$.
Since $b$ is reduced, we replace the quotient $G/N$ in \cite[Lemma 2.4]{Ar} by $K/{\rm F}^*(G)$ and then apply the proof strategy of \cite[Lemma 2.4]{Ar} to prove that the image of $P$ in $K/{\rm F}^*(G)$ is a Sylow subgroup of $K/{\rm F}^*(G)$. By \cite[Chapter 6, Lemma 3.3]{G}, we have $K/{\rm F}^*(G)= {\rm O}_{p', p, p'}(K/{\rm F}^*(G))$.

Let $M$ be the inverse image in $K$ of ${\rm O}_{p'}(K/{\rm F}^*(G))$
and let $L$ be the inverse image in $K$ of ${\rm O}_{p', p}(K/{\rm F}^*(G))$.
Clearly $M$ and $L$ are normal in $G$.
We take $(P, e_P)$ to be a common maximal Brauer pair associated with the blocks $b$ and $b_K$.
By the Frattini argument again, we have $G = K \cdot {\rm N}_G(P, e_P)$.
Since $G/K \cong {\rm N}_G(P, e_P)/K \cap {\rm N}_G(P, e_P)$ and $\C_G(P) \leq K \cap {\rm N}_G(P, e_P)$,
$G/K$ is a $p'$-group and so is $G/L$.
Then we have $P \subseteq L$ and $P$ is a defect group of $b_L$.
Since $b_M$ is $L$-invariant, we have $L = PM$. The subgroups $L$ and $M$ are the desired subgroups in Statement 2.
\end{proof}

\begin{lem}\label{p-group-extension}
Let $G_1$ and $G_2$ be finite groups with a common normal subgroup $H$.
Let $\phi_i: G_i \to {\rm Aut}(H)$ be the homomorphism induced by the conjugation action of $G_i$ on $H$, where $i=1, 2$.
Assume that ${\rm Z}(H)$ is a $p'$-group, that the centralizer of $H$ in each $G_i$ is equal to ${\rm Z}(H)$,
that each quotient $G_i/H$ is a $p$-group, and that
$\phi_2(G_2) \leq \phi_1(G_1)$.
Then there is an injective group homomorphism from $G_2$ to $G_1$ preserving $H$ elementwise.
\end{lem}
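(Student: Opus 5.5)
The plan is to compare $G_2$ with the subgroup $G_1':=\phi_1^{-1}(\phi_2(G_2))$ of $G_1$, and to show that the two are isomorphic by an isomorphism that is the identity on $H$. Composing with the inclusion $G_1'\le G_1$ then gives the required injective homomorphism. Throughout, write $Z={\rm Z}(H)$ and $A=\phi_2(G_2)\le\phi_1(G_1)$.

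\emph{Step 1: both groups are extensions of $Z$ by $A$ with the same action.} Since $\Ker\phi_i={\rm C}_{G_i}(H)=Z\le H$, both $G_2$ and $G_1'$ are extensions
$$1\to Z\to G_2\to A\to 1\qquad\text{and}\qquad 1\to Z\to G_1'\to A\to 1 .$$
The conjugation action of either group on $Z$ is the restriction of the automorphism of $H$ it induces. Hence both extensions give $Z$ the same $A$-module structure. Moreover $\phi_1(H)=\phi_2(H)={\rm Inn}(H)\cong H/Z$, and over ${\rm Inn}(H)$ both extensions restrict to the same extension $1\to Z\to H\to {\rm Inn}(H)\to 1$. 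Finally, $|A:{\rm Inn}(H)|=|G_2/H|$ is a power of $p$ while $|Z|$ is prime to $p$.

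\emph{Step 2: the extensions are equivalent.} Let $e_1,e_2\in H^2(A,Z)$ be the classes of the two extensions. By Step 1 they have the same restriction to ${\rm Inn}(H)$. Since corestriction after restriction is multiplication by the index, which is invertible on the $p'$-module $Z$, restriction $H^2(A,Z)\to H^2({\rm Inn}(H),Z)$ is injective. Hence $e_1=e_2$. So there is an isomorphism $\theta:G_2\to G_1'$ with $\theta|_Z={\rm id}$ and $\phi_1\circ\theta=\phi_2$.

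\emph{Step 3: correcting $\theta$ to be the identity on $H$.} This is the main point. For $h\in H$, the elements $\theta(h)$ and $h$ induce the same automorphism of $H$, so $\theta(h)=h\,z(h)$ with $z(h)\in Z$. Since $Z$ is central in $H$ and $\theta|_Z={\rm id}$, the map $z$ is a homomorphism $H/Z\to Z$, that is, an element of $Z^1({\rm Inn}(H),Z)$ for the trivial action.

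I will check that $z$ is $A$-invariant, using $\theta(ghg^{-1})=\theta(g)\theta(h)\theta(g)^{-1}$ together with $\phi_1\theta(g)=\phi_2(g)$. On the cohomology side, ${\rm Inn}(H)$ acts trivially on $Z$, so $H^1({\rm Inn}(H),Z)=Z^1({\rm Inn}(H),Z)$. The same coprimeness argument (restriction to a normal subgroup of $p$-power index, coefficients prime to $p$) shows that restriction $H^1(A,Z)\to H^1({\rm Inn}(H),Z)^{A}$ is surjective. Coboundaries of $A$ restrict to $0$ on ${\rm Inn}(H)$, so I obtain a derivation $d:A\to Z$ whose restriction to ${\rm Inn}(H)$ is exactly $z^{-1}$.

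Now define $\theta'(g)=\theta(g)\,d(\phi_2(g))$. Because $d$ is a derivation for the conjugation action, $\theta'$ is again a homomorphism compatible with $\phi_1,\phi_2$ and bijective onto $G_1'$. By construction $\theta'(h)=h$ for all $h\in H$. Thus $\theta'$, followed by the inclusion $G_1'\hookrightarrow G_1$, is the desired injective homomorphism preserving $H$ elementwise.

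The only delicate points are the sign and convention bookkeeping in Step 3, namely checking that multiplying by $d\circ\phi_2$ preserves the homomorphism property, and verifying the $A$-invariance of $z$. Both are routine once the conventions are fixed.
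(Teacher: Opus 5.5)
Your argument is correct, but it takes a different route from the paper. The paper's construction is elementary and uses Sylow subgroups. It takes Sylow $p$-subgroups $P_i$ of $G_i$, so that $G_2=P_2H$ and $P_i\cap{\rm Z}(H)=1$, which makes $\phi_i$ injective on $P_i$. It then conjugates $P_1$ so that $\phi_2(P_2)\le\phi_1(P_1)$ and defines $\tau(dh)=(\phi_{P_1}^{-1}\circ\phi_{P_2})(d)\,h$ for $d\in P_2$, $h\in H$. Well-definedness reduces to $\phi_{P_1}^{-1}\circ\phi_{P_2}$ fixing $P_2\cap H$ pointwise. This holds because such an $x$ and its image are commuting $p$-elements that differ by an element of the $p'$-group ${\rm Z}(H)$. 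The homomorphism property follows because $(\phi_{P_1}^{-1}\circ\phi_{P_2})(d)$ and $d$ induce the same automorphism of $H$. So the paper's map is the identity on $H$ from the start, and nothing like your Step 3 correction is needed.

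Your cohomological version first uses injectivity of restriction on $H^2(A,{\rm Z}(H))$ to get an equivalence of extensions. It then uses surjectivity of ${\rm res}:H^1(A,{\rm Z}(H))\to H^1({\rm Inn}(H),{\rm Z}(H))^{A}$ to twist that equivalence into one fixing $H$. This is longer and relies on the extension-class and derivation machinery. In exchange it is more conceptual and uses only that $|G_2:H|$ is prime to $|{\rm Z}(H)|$, so it proves the lemma under that weaker coprimality hypothesis.

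One bookkeeping point: with $\theta'(g)=\theta(g)\,d(\phi_2(g))$, you need $d(ab)={}^{b^{-1}}d(a)\,d(b)$. Alternatively, put the correction factor on the left of $\theta(g)$ and use an ordinary left derivation. On ${\rm Inn}(H)$ both conventions agree, since it acts trivially on ${\rm Z}(H)$.
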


\begin{proof}
For each $i$, let $P_i$ be a Sylow $p$-subgroup of $G_i$. By the assumptions,
we have $G_i = P_i H$ and the restriction $\phi_{P_i}$ of $\phi_i$ to $P_i$ is an isomorphism from $P_i$ to $\phi_i(P_i)$.
Since $\phi_2(G_2) \leq \phi_1(G_1)$, we adjust the choice of $P_1$ so that
$\phi_2(P_2)$ is a subgroup of $\phi_1(P_1)$.
Set $\hat{P}_2 =(\phi_{P_1}^{-1} \circ \phi_{P_2})(P_2)$. Then $\hat P_2$ is a subgroup of $P_1$. Denote by $\tau$ the correspondence from $P_2 H$ to $\hat{P}_2 H$
sending $d  h$ to $(\phi_{P_1}^{-1} \circ \phi_{P_2})(d)  h$ for any $d \in P_2$ and any $h \in H$.
Take $x\in Q=P_2\cap H$.
We have $\phi_1^{-1} (\phi_2(x))\in xZ(H)$.
Since $Z(H)$ is a $p'$-group, we have $\phi_1^{-1} (\phi_2(x))=x$. So
$\tau$ is well-defined. Now it is trivial to prove that $\tau$ is a group isomorphism preserving $H$ elementwise.
\end{proof}

\begin{prop}\label{Inertial-PE(G)}
Let $G$ be a finite group, $X_1, \cdots, X_n$ all components of $G$ and $b$ a block of $G$ with abelian defect group $P$.
Assume that each $X_i$ is normal in $G$, that $G = PX_1 X_2 \cdots X_n$, and that the unique block of $PX_i$ covering $c_i$ is inertial,
where $c_i$ is the block of $X_i$ covered by $b$. Then the block $b$ is inertial.
\end{prop}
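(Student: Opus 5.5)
The plan is to reduce to a central product decomposition and use the fact that basic Morita equivalences behave well under tensor products and under passing to subalgebras of ``fixed points''. Set $X = X_1X_2\cdots X_n = {\rm E}(G)$, and let $H_i = PX_i$. Since each $X_i$ is normal in $G$ and components commute pairwise, $X$ is a central product of the $X_i$. The block $b_X$ is $c_1\otimes\cdots\otimes c_n$ viewed through the central product, and $P\cap X$ is a defect group of it.

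First I would form the external direct product $\hat G = H_1\times\cdots\times H_n$. It carries the block $\hat b = e_1\otimes\cdots\otimes e_n$, where $e_i$ is the block of $H_i$ covering $c_i$. Its defect group is $P^n$. By hypothesis each $e_i$ is inertial. The tensor product of basic Morita equivalences is again basic, because a tensor product of endopermutation modules is endopermutation. Hence $\hat b$ is basically Morita equivalent to the block of ${\rm N}_{\hat G}(P^n) = \prod_i {\rm N}_{H_i}(P)$, so $\hat b$ is inertial.

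Next I would realize $G$ as a subquotient of $\hat G$. The multiplication map $\hat G\to G$ is not a homomorphism, so instead take the subgroup $\hat G_\Delta = \{(u_1x_1,\dots,u_nx_n) : u_i\in P,\ x_i\in X_i,\ u_1\equiv\cdots\equiv u_n\}$, with the congruence taken modulo the appropriate subgroups. Then there is a surjection $\hat G_\Delta\to G$ with central kernel $Z$. The kernel consists of elements of $\prod_i (P\cap X_i){\rm Z}(X_i)$ multiplying to $1$. Here $P$ acts diagonally, which uses that $P$ is abelian so that the diagonal copy makes sense and commutes appropriately. Lemma \ref{p-group-extension} can be used to identify $PX_i$ inside $\hat G$ compatibly with the conjugation action on $X_i$.

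Then I would invoke known transfer results to pass inertiality from $\hat G$ down to $\hat G_\Delta$ and then to $G$. The step down to $\hat G_\Delta$ uses that $\hat G_\Delta$ is normal in $\hat G$ of index a power of $p$, with $\hat G = P^n\hat G_\Delta$. Inertiality of a block covered with $p$-power index and defect group containing the complement should descend; this is Puig's result that inertial blocks are preserved when restricting to normal subgroups, or Zhou's corresponding result for the $p$-extension case. The step to $G$ uses that the central quotient by $Z$ preserves inertiality, via Puig's theorem on central extensions and basic Morita equivalence, since $Z$ is central and the image of the block is the block $b$.

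The main obstacle I expect is this descent step, showing that inertiality passes from $\hat b$ to the block of $\hat G_\Delta$ and then to $b$. This requires checking that the endopermutation source of the Morita bimodule restricts compatibly. I would try first to work directly with the source algebra: the source algebra of $b_X$ is the tensor product of the source algebras of the $c_i$. Inertiality of $e_i$ means the source algebra of $e_i$ is $S_i\otimes {\cal O}_*(P\rtimes E_i)$ for an endopermutation Dade $P$-algebra $S_i$. Restricting to $P\cap X_i$ and taking the tensor product then gives a source algebra for $b_X$ of the form $S\otimes{\cal O}_*((P\cap X)\rtimes E)$. Finally I would extend this to $G = PX$ using that $P$ is abelian and acts compatibly on each factor, concluding that the source algebra of $b$ has the inertial form.
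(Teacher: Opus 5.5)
\textbf{Verdict: genuine gap, but fixable.} Your strategy uses the same three ingredients as the paper: inertiality of $e_1\otimes\cdots\otimes e_n$ over $\prod_i PX_i$; Puig's descent of inertiality to a normal subgroup (the paper's [P2, Theorem 3.13]); and Puig's result on quotients by central $p$-subgroups ([P1, Corollary 1.14]). However, the step linking $\prod_i H_i$ to $G$ fails as written.

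\textbf{The surjection is not well defined.} Whatever congruence you intend, your subgroup contains $\hat G_\Delta=\Delta(P)\cdot(X_1\times\cdots\times X_n)$. Suppose a homomorphism $\hat G_\Delta\to G$ sends $(v,\dots,v)$ to $v$ and is multiplication on $\prod_iX_i$. Then the two rules must agree on
\[
\Delta(P)\cap\prod_iX_i=\{(w,\dots,w):w\in P\cap X_1\cap\cdots\cap X_n\},
\]
but they give $w$ and $w^n$ respectively. For $n=2$ this is a real obstruction as soon as $P\cap X_1\cap X_2\neq 1$. An example: $p=2$, $G$ a central product of two copies of ${\rm SL}_2(q)$ over their centres, and $b$ a block with defect group ${\rm Z}(G)$ for suitable odd $q$. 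What is missing is the reduction the paper makes first. Since $P$ is abelian and $G=P{\rm E}(G)$, ${\rm O}_p(G)\le P$ is central, so one may assume ${\rm O}_p(G)=1$. Then:
\begin{itemize}
\item $P\cap{\rm Z}({\rm E}(G))=1$;
\item $(ux_1,\dots,ux_n)\mapsto ux_1\cdots x_n$ is a well-defined surjective homomorphism;
\item $P\cap{\rm E}(G)=\prod_i(P\cap X_i)$.
\end{itemize}

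\textbf{The kernel is not as described and need not be central.} After this reduction the kernel is $Z_p\cdot{\rm Ker}(\tau)$.
\begin{itemize}
\item $Z_p$ consists of the tuples $\bigl(\prod_{i\neq j}y_i^{-1}\bigr)_{1\le j\le n}$ with $y_i\in P\cap X_i$. It is a central $p$-subgroup, so [P1, Corollary 1.14] handles it.
\item ${\rm Ker}(\tau)=\{(z_i)\in\prod_i{\rm Z}(X_i):z_1\cdots z_n=1\}$ is a $p'$-group. It is normal but in general not central, since nothing forces $P$ to centralize ${\rm Z}({\rm E}(G))$.
\end{itemize}
${\rm Ker}(\tau)$ is harmless for a different reason than the one you give. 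Because $\tau(c_1\otimes\cdots\otimes c_n)=c\neq 0$, ${\rm Ker}(\tau)$ lies in the kernel of the block. Factoring it out is therefore an isomorphism of block algebras compatible with Brauer correspondents, not an instance of a central-extension theorem.

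\textbf{Comparison with the paper.} With these repairs your argument is a valid variant of the paper's, doing restriction and quotient in the opposite order. The paper divides $\prod_i\bar G_i$ by ${\rm Ker}(\tau)$ and then needs Lemma \ref{p-group-extension} to embed $G$ into the result before descending. Your $\hat G_\Delta$ is built from subgroups of $G$, so no embedding lemma is needed. Normality of the kernel is also automatic for you. By contrast, dividing the full product by ${\rm Ker}(\tau)$ tacitly requires $P$ to centralize each ${\rm Z}(X_i)\cap\prod_{j\neq i}X_j$.

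\textbf{The source-algebra alternative.} The descent you single out as the main obstacle is exactly Puig's theorem, which you already invoke. Drop the source-algebra alternative. Restricting the source algebra of $e_i$ to $P\cap X_i$ does not yield the source algebra of $c_i$; that passage is precisely the content of Puig's theorem. Passing back up from ${\rm E}(G)$ to $P{\rm E}(G)$ would also need a separate argument.
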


\begin{proof}
Clearly we have $E(G)=X_1\cdots X_n$. Let $c$ be the block of ${\rm E}(G)$ covered by $b$. Then $P$ stabilizes $c$ and $b$ is equal to $c$. By \cite[Lemma 2.1]{Wu}, $c$ can be uniquely written as the product $d_1d_2\cdots d_n$, where $d_i$ is some block of $X_i$ for each $i$. The uniqueness implies that $P$ stabilizes each $d_i$.
Each $d_i$ is covered by $c$ and thus covered by $b$. We have $c_i=d_i$ for all $i$.

Since $P$ is abelian, ${\rm O}_p(G)$ is central in $G$.
By \cite[Corollary 1.14]{P1}, we reduce the proof of the lemma to the case ${\rm O}_p(G) = 1$.
In this case, ${\rm O}_p(X_i)=1$ for all $i$.

Set $G_i = PX_i$ and $\bar{G}_i = G_i/{\rm O}_p(G_i)$ and let $\bar{P}_i$ be the image of $P$ in $\bar{G}_i$. The inclusion $X_i\subseteq G_i$ induces an injective group homomorphism $X_i\rightarrow \bar G_i$ and we may identify each $X_i$ as a normal subgroup of $\bar{G}_i$. Since $P$ stabilizes $c_i$, $\bar P_i$ stabilizes $c_i$ and thus each $c_i$ is a block of $\bar G_i$ with defect group $\bar P_i$. Similarly, each block $c_i$ is a block of $G_i$ with defect group $P$.
Since $P$ is abelian, the group ${\rm O}_p(G_i)$ is central in $G_i$. The natural homomorphism $G_i\rightarrow \bar G_i$ induces an algebra homomorphism ${\cal O}G_i\rightarrow {\cal O}\bar G_i$, which sends $c_i$ onto $c_i$. By \cite[Corollary 1.14]{P1}, each $\bar c_i$ is an inertial block of $\bar G_i$ with defect group $\bar P_i$.

Set $\tilde{G}=\bar{G}_1 \times \bar{G}_2 \times \cdots \times \bar{G}_n$
and $\tilde{X}=X_1 \times X_2 \times \cdots \times X_n$.
Let $\tau$ be the canonical homomorphism
from $\tilde{X}$ to ${\rm E}(G)$ sending $(x_1, x_2, \cdots, x_n)$ to $x_1x_2 \cdots x_n$ for any $x_i \in X_i$.
Let $\tilde{c}$ be the block of $\tilde{X}$ such that $\tau(\tilde{c}) = c$. We identify ${\cal O}\tilde G$ and ${\cal O}\bar G_1\otimes_{\cal O}\cdots\otimes_{\cal O}{\cal O}\bar G_n$ through the obvious isomorphism ${\cal O}\tilde G\cong {\cal O}\bar G_1\otimes_{\cal O}\cdots\otimes_{\cal O}{\cal O}\bar G_n$. Since $c=c_1\cdots c_n$,
by \cite[Lemma 2.1]{Wu}, we easily conclude that $\tilde{c}$ is equal to the tensor product $c_1 \otimes c_2 \otimes \cdots \otimes c_n$.
Since each $c_i$ is an inertial block of $\bar G_i$ with defect group $\bar P_i$,
$\tilde{c}$ is an inertial block of $\tilde G$ with defect group $\bar P_1\times\cdots\times \bar P_n$.

Set $\hat{G}=\tilde{G}/{\rm Ker}(\tau)$. Clearly ${\rm E}(\hat{G})=\tilde X/{\rm Ker}(\tau)$ and we
identify ${\rm E}(\hat{G})$ with ${\rm E}(G)$ through $\tau$.
In particular, $G$ and $\hat{G}$ both contain ${\rm E}(G)$ as a normal subgroup. Applying Lemma \ref{p-group-extension} to finite groups $G$ and $\hat G$ and their common subgroup ${\rm E}(G)$, we get an injective group homomorphism from $G$ to $\hat G$ preserving ${\rm E}(G)$ elementwise. We regard $G$ with a subgroup of $\hat G$.
Since $\tau(\tilde c)=c$ and $\tilde{c}$ is an inertial block of $\tilde G$, $c$ is an inertial block of $\hat G$. Note that $c$ is also a block of $G$. By \cite[Theorem 3.13]{P2}, $c$ is inertial as a block of $G$.
\end{proof}

Let $G$ be a finite group and $b$ a block of $G$. Assume that $b$ is reduced, that a defect group $P$ of $b$ is abelian, and that the layer ${\rm E}(G)$ of $G$ is nontrivial.
Let $X_1, \cdots, X_n$ be all components of $G$, $c$ the block of ${\rm E}(G)$ covered by $b$, and for each $i$, let $c_i$ be the block of $X_i$ covered by $c$.
By Theorem \ref{Reduced-block}(ii), each $X_i$ is stable under the $P$-conjugation.
Since $b$ is reduced, $c$ is $P$-invariant and it can be viewed as a block of $P{\rm E}(G)$ with defect group $P$. As we see in the first paragraph of Proposition \ref{Inertial-PE(G)},
$c$ can be uniquely written as the product $c_1\cdots c_n$, each $c_i$ is $P$-invariant and thus can be viewed as a block of $G_i= PX_i$ with defect group $P$.

\begin{cor}\label{Compenent-inertial}
Keep the notation and the assumptions as above. Assume that each $c_i$ as a block of $G_i$ is inertial. Then
the block $b$ is inertial.
\end{cor}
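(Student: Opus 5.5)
We use Theorem \ref{Reduced-block}(2) to pass from $G$ to the subgroup $L=PM$, then to $P{\rm E}(G)$. Proposition \ref{Inertial-PE(G)} handles $P{\rm E}(G)$. Inertiality is then carried back up, first through $p'$-index extensions and finally from $L$ to $G$.

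\emph{Step 1: the block $c$ of $P{\rm E}(G)$.} Set $H=P{\rm E}(G)$. Each $X_i$ is normal in $L$ by Theorem \ref{Reduced-block}(2), so each $X_i$ is normal in $H$. The components of $H$ are exactly $X_1,\dots,X_n$. Let $c$ be viewed as a block of $H$ with defect group $P$. Since each $c_i$, as a block of $G_i=PX_i$, is inertial by hypothesis, Proposition \ref{Inertial-PE(G)} applies directly and shows that $c$ is an inertial block of $H$.

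\emph{Step 2: climbing from $H$ to $L$.} Recall ${\rm F}^*(G)={\rm E}(G){\rm F}(G)$. Since $b$ is reduced, the proof of Theorem \ref{Reduced-block} gives ${\rm F}(G)\le \Z(G){\rm O}_p(G)$ with ${\rm O}_p(G)\le P$. Hence
\[ P{\rm F}^*(G)=H\,\Z(G). \]
Passing from $H$ to $H\Z(G)$ only adjoins central $p'$-elements, which should preserve inertiality via \cite[Corollary 1.14]{P1} or a direct central-product argument.

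The next step goes from $P{\rm F}^*(G)$ to $L=PM$. Here $M/{\rm F}^*(G)$ is a $p'$-group, and $P{\rm F}^*(G)$ is normal of $p'$-index in $L$. The block $b_L$ covers the $L$-stable block $b_{P{\rm F}^*(G)}$, since $b$ is quasi-primitive and both subgroups are normal in $G$. I would invoke the known result that a block covering an inertial block of a normal subgroup of $p'$-index is inertial. The natural references are Puig's theorem that inertiality ascends along normal subgroups of $p'$-index when the defect group is contained in the subgroup (\cite{P3}), or \cite[Theorem 3.13]{P2}.

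\emph{Step 3: from $L$ to $G$.} The group $G/L$ is a $p'$-group, $P\le L$, and $b$ covers the $G$-stable block $b_L$. The same ascent result therefore gives that $b$ is inertial.

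\emph{Main obstacle.} The delicate point is the ascent along $p'$-index normal subgroups in Steps 2 and 3. It requires care because inertiality is a basic Morita equivalence with the normalizer block, and there are two issues:
\begin{itemize}
\item the source algebra of $b$ is a possibly twisted crossed product over that of the normal-subgroup block, with $p'$-group $G/L$;
\item the normalizer ${\rm N}_G(P)$ must match up with ${\rm N}_L(P)$ extended by the inertial quotient.
\end{itemize}
I would first try to quote Puig's statement that inertial blocks ascend through normal subgroups of $p'$-index in \cite{P3}. If that does not apply verbatim, I would reduce via Fong--Reynolds (unnecessary here by stability) and Külshammer--Puig type extension results, so that the basic Morita equivalence for $b_L$ extends equivariantly to $G$.
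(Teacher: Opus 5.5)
Your Steps 1 and 3 agree with the paper. Proposition \ref{Inertial-PE(G)} makes $c$ inertial as a block of $P{\rm E}(G)$. The final passage from $L$ to $G$, across the $p'$-group $G/L$, is the ascent result that the paper quotes as \cite[Corollary]{Z}.

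The gap is in Step 2. There you assert that $P{\rm F}^*(G)$ is a normal subgroup of $p'$-index in $L$. When you invoke quasi-primitivity to make $b_{P{\rm F}^*(G)}$ stable, you even assume it is normal in $G$. The index is indeed prime to $p$, since $P\cap M\le {\rm F}^*(G)$, but normality fails in general.

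Put $\bar L=L/{\rm F}^*(G)$ and $\bar M=M/{\rm F}^*(G)={\rm O}_{p'}(K/{\rm F}^*(G))$, and let $\bar P$ be the image of $P$. Then $\bar L=\bar M\bar P$ with $\bar M\cap\bar P=1$, so $P{\rm F}^*(G)\unlhd L$ if and only if $[\bar P,\bar M]=1$. Nothing in Theorem \ref{Reduced-block} or in the hypotheses forces $\bar P$ to centralize $\bar M$. One only knows that $\bar L$ is ${\rm O}_{p',p}$ of a subgroup of $\prod_i\Out(S_i)$. Group-theoretically, $\bar L$ can look like $S_3$ with $p=2$: a graph automorphism of ${\rm PSL}_3(q)$ inverts a diagonal automorphism of order $3$.

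So the ascent result cannot be applied to $P{\rm F}^*(G)\le L$, because it needs a \emph{normal} subgroup of $p'$-index carrying a stable block. Your argument therefore gives no way to reach $b_L$.

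What is missing is a transfer result adapted to the shape $L=PM$, where the $p'$-extension happens \emph{below} $P$. The paper takes $N={\rm O}_p(G){\rm E}(G)$. This is normal in $G$ and of $p'$-index in $M$, because ${\rm F}^*(G)={\rm O}_p(G)\Z(G){\rm E}(G)$ for reduced $b$ and $M/{\rm F}^*(G)$ is a $p'$-group. It observes that $PN=P{\rm E}(G)$. It then applies \cite[Corollary 1.13]{ZZ} to go directly from inertiality of $c$ as a block of $P{\rm E}(G)$ to inertiality of $b_L$. With that in place, your detour through $H\Z(G)$ is unnecessary, and Step 3 finishes as you describe.
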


\begin{proof}
By Theorem \ref{Reduced-block}(ii),
there exist normal subgroups $M$ and $L$ of $G$
such that ${\rm F}^*(G) \leq M \leq L$, $G/L$ and $M/{\rm F}^*(G)$ are both $p'$-groups, $L = PM$ and each component of $G$
is normal in $L$. Let $b_L$ be the block of $L$ covered by $b$.
Since each $c_i$ is inertial as a block of $G_i$, by Lemma \ref{Inertial-PE(G)}, $c$ is inertial as a block of $P{\rm E}(G)$. Since the block $b$ is reduced, we have
${\rm F}^*(G) = {\rm O}_p(G)\Z(G){\rm E}(G)$. Clearly, ${\rm O}_p(G){\rm E}(G)$ is a normal subgroup of $M$ with index coprime to $p$.
By \cite[Corollary 1.13]{ZZ}, $b_L$ is inertial. Since $G/L$ is a $p'$-group, by \cite[Corollary]{Z}, $b$ is inertial.
\end{proof}

\section{Proof of Theorem \ref{Main}}

In this section, we always assume $p = 2$ and give a proof of Theorem \ref{Main}.
We firstly recall the classification of blocks of quasi-simple groups with abelian defect groups.

\begin{thm}[\!\!{\cite[Theorem 6.1]{EKK}}]\label{2-block-abel1}
Let $H$ be a finite quasi-simple group. If $c$ is a block of $H$ with abelian defect group $D$, then one (or more) of the following holds.
\begin{enumerate}[itemsep=-3pt,topsep=7pt,label=\emph{(\roman*)}]
    \item $H/Z(H)$ is one of $A_1(2^a)$, ${}^2G_2(q)$ (where $q \geq 27$ is a power of $3$ with odd exponent),
    or ${\rm J}_1$, $c$ is the principal block, and $D$ is elementary abelian.
    \item $H$ is ${\rm Co}_3$, $c$ is a non-principal block, and $D \cong C_2 \times C_2 \times C_2$ (there is one such block).
    \item $c$ is Morita equivalent to a block $d$ of $\mathcal{O}L$, where $L = L_0 \times L_1 \leq H$ such that $L_0$ is
    abelian and the block of $L_1$ covered by $d$ has Klein 4-defect groups.
    \item $c$ is nilpotent covered.
\end{enumerate}
\end{thm}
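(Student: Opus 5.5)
Since this statement is quoted verbatim from \cite[Theorem 6.1]{EKK}, we would not reprove it here. Instead we would take it as an input to the proof of Theorem \ref{Main}, exactly as the paper does. If a proof were to be reconstructed, the plan is to run through the classification of finite simple groups, applied to $S=H/Z(H)$. For each family we would determine the $2$-blocks of the covering groups $H$ with abelian defect groups and place each such block into one of (i)--(iv).

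\textbf{Preliminary reduction.} First we would reduce to blocks that are not nilpotent covered. A block with abelian defect group whose fusion system is trivial on $D$ lands in (iv) by definition. So the real work is blocks with nontrivial inertial quotient $E=N_H(D,e_D)/C_H(D)$. Since $E$ is a $2'$-group acting faithfully on the abelian $2$-group $D$, it acts nontrivially on $\Omega_1(D)$.

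\textbf{Alternating and sporadic groups.}
\begin{enumerate}
\item For alternating groups and their double covers, we would use combinatorics of cores and weights (bar cores for the spin blocks). Abelian defect in characteristic $2$ forces weight at most $1$. This yields only cyclic or Klein four defect situations, with the rest nilpotent covered, giving (iii) or (iv).
\item For sporadic groups, we would check the known $2$-block data (decomposition matrices and defect groups) case by case. This isolates ${\rm J}_1$ (principal block, $D=C_2^3$) in (i) and the exceptional non-principal $C_2^3$ block of ${\rm Co}_3$ in (ii).
\end{enumerate}

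\textbf{Groups of Lie type.}
\begin{enumerate}
\item In defining characteristic $2$, abelian Sylow $2$-subgroups occur essentially only for $A_1(2^a)$, giving (i). Other blocks there have full Sylow or trivial defect.
\item In odd characteristic, we would use Bonnaf\'e--Rouquier / Bonnaf\'e--Dat--Rouquier Morita equivalences. These reduce a block in Lusztig series $\mathcal{E}_2(G,s)$ to a block of a Levi subgroup $C^*(s)$ (a quasi-isolated reduction). An abelian $2$-defect group then forces the reduced block to sit in a product of a torus (the abelian factor $L_0$) and a group whose block has Klein four defect, typically an $\mathrm{SL}_2(q)$- or $\mathrm{PGL}_2(q)$-type factor. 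This gives (iii) or (iv).
\item The Ree groups ${}^2G_2(q)$ contribute their principal blocks with $D=C_2^3$, giving (i).
\end{enumerate}

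\textbf{Main obstacle.} We expect the hard step to be the odd-characteristic Lie type case when $s$ is quasi-isolated, so that the Jordan decomposition does not pass to a proper Levi subgroup. There one must identify the defect groups and fusion directly, presumably via Cabanes--Enguehard's description of $2$-blocks through unipotent blocks of centralizers. The exceptional covers also have to be handled separately.
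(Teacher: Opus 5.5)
Your proposal matches the paper: Theorem~\ref{2-block-abel1} is quoted from \cite[Theorem 6.1]{EKK} without proof and used as a black box in Lemmas~\ref{Hy-H} and~\ref{Lie-nil}, so citing it is all that is needed. One correction to your optional sketch: for $A_n$ at $p=2$, abelian defect forces weight at most $2$, not $1$ (a weight-$2$ block of $A_n$ has defect group $D\cap A_n\cong C_2\times C_2$ with $D\cong D_8$ a Sylow $2$-subgroup of $S_4$, e.g.\ the principal block of $A_5$), and bar cores play no role because the centre of $2.A_n$ is a $2$-group, so its $2$-blocks correspond bijectively to those of $A_n$, with defect groups the full preimages.
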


\begin{lem}\label{Hy-H}
Let $H$ be a finite quasi-simple group and $c$ a block of $H$ with abelian defect group.
If a hyperfocal subgroup of the block $b$ is isomorphic to $C_{2^{m_1}} \times C_{2^{m_2}} \times \cdots \times C_{2^{m_r}}$, where $m_i \geq 2$ for all $i$,
then $c$ is nilpotent covered.
\end{lem}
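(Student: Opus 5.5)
We go case by case through Theorem \ref{2-block-abel1} and show that cases (i), (ii) and (iii) cannot occur under the hypothesis on the hyperfocal subgroup. (The statement writes $b$ for the block; we mean $c$.) Let $D$ be a defect group of $c$ and $(D,e_D)$ a maximal Brauer pair, with inertial quotient $E=\mathrm{N}_H(D,e_D)/\C_H(D)$, which has odd order. Since $D$ is abelian, the hyperfocal subgroup is $[D,E]$. Moreover, because $|E|$ is odd and $D$ is abelian, the coprime action gives
$$D=\C_D(E)\times[D,E].$$
We will use two consequences of this decomposition. First, $[D,E]$ is a direct factor of $D$. Second, $[D,E]$ is trivial exactly when $c$ is nilpotent. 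The hypothesis says that $[D,E]\cong C_{2^{m_1}}\times\cdots\times C_{2^{m_r}}$ with every $m_i\geq 2$, so $[D,E]$ has no cyclic direct factor of order $2$.

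\emph{Cases (i) and (ii).} Here $D$ is elementary abelian. Then $[D,E]$ is elementary abelian as well. By the hypothesis it then has to be trivial, and $r=0$. So $c$ is nilpotent, and a nilpotent block is in particular nilpotent covered, which is the conclusion. (If $r=0$ is meant to be excluded by the hypothesis, then these cases are simply impossible.)

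\emph{Case (iii).} Here $c$ is Morita equivalent to a block $d$ of $L=L_0\times L_1$, with $L_0$ abelian and the block of $L_1$ covered by $d$ having Klein four defect groups. This block $d$ is the tensor product of a nilpotent block of $L_0$ (a block of an abelian group) and a block $d_1$ of $L_1$ with defect group $V\cong C_2\times C_2$. So the hyperfocal subgroup of $d$ equals that of $d_1$, which is $[V,E_1]$ and is either $1$ or $V$. In the paper of Eaton--Kessar--K\"ulshammer the equivalence in (iii) comes from a Puig-type reduction (via Fong reductions and Bonnaf\'e--Rouquier equivalences) that preserves the fusion system. The plan is therefore to argue that $c$ and $d$ have isomorphic fusion systems, hence isomorphic hyperfocal subgroups.
\begin{itemize}
\item If that hyperfocal subgroup is $V$, it is elementary abelian of order $4$. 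This contradicts every $m_i\geq 2$.
\item If it is trivial, then $c$ is nilpotent and we are in case (iv) again.
\end{itemize}

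\emph{Main obstacle.} The hard part is justifying that the Morita equivalence in (iii) preserves the fusion system, or at least the isomorphism type of the hyperfocal subgroup, since a bare Morita equivalence need not do so. I would first look into the proof of \cite[Theorem 6.1]{EKK} to confirm that the equivalence is a basic one, or comes from a sequence of reductions each preserving the fusion system. If that fails, I would fall back on a Morita-invariant route. The number of irreducible Brauer characters $l(c)$ and the structure of the Cartan matrix are Morita invariants. For a Klein four block these force $E$ to have order $1$ or $3$, with $E$ acting on a single $C_2\times C_2$ factor. Then $[D,E]$ would be elementary of order $4$ or trivial, again contradicting the hypothesis unless $c$ is nilpotent.
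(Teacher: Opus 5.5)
\textbf{There is a gap in case (iii).} Your overall strategy is the paper's: run through the cases of Theorem~\ref{2-block-abel1}. In (i) and (ii) the defect group, and hence $[D,E]$, is elementary abelian. In (iii) the hyperfocal subgroup is trivial or Klein four. So only nilpotency or (iv) survives. Your treatment of (i), (ii), (iv) is fine, as are the preliminary facts: $[D,E]$ is the hyperfocal subgroup, $D=\C_D(E)\times[D,E]$, and $[D,E]=1$ if and only if $c$ is nilpotent.

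Case (iii) is the only case with real content, and there the proof is missing. You need the hyperfocal subgroup \emph{of $c$}, but you only compute it for the Morita-equivalent block $d$. Transferring it back to $c$ is exactly what you leave open, as you note yourself. The paper does not prove this step either; it quotes \cite[Proposition 2.7(iii)]{EL1} (Eaton--Livesey), which states that in case (iii) the hyperfocal subgroup of $c$ is trivial or a Klein four group. Without that input, or an actual verification that the equivalence built in the proof of \cite[Theorem 6.1]{EKK} preserves the fusion system, your case (iii) is a plan rather than a proof.

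\textbf{Your fallback via Morita invariants would not close the gap.} The number $l(c)$ and the Cartan matrix do not determine $E$ or $[D,E]$. For example, take $\hat E=3^{1+2}_+$ acting through $E=\hat E/\Z(\hat E)\cong C_3\times C_3$ on $D=(C_4\times C_4)\times(C_4\times C_4)$. Let each $C_3$ factor act fixed-point-freely on one copy of $C_4\times C_4$ and trivially on the other. A block of $D\rtimes\hat E$ lying over a faithful character of $\Z(\hat E)$ then has defect group $D$ and inertial quotient $E$. It has $l=1$ and Cartan matrix $(|D|)$, exactly like a nilpotent block, yet $[D,E]=D\cong C_4^4$, which has precisely the shape in the hypothesis.

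So the claim that $l(c)\in\{1,3\}$ forces $|E|\in\{1,3\}$ is unjustified; in general, linking $l(c)$ to $E$ is the content of Alperin's weight conjecture. Even $|E|=3$ would not force $[D,E]$ to be Klein four, since $C_3$ acts fixed-point-freely on $C_4\times C_4$. Replace your discussion of case (iii) with the citation of \cite[Proposition 2.7(iii)]{EL1}; the rest of your argument then coincides with the paper's.
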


\begin{proof}Let $(D, f)$ be a maximal $c$-Brauer pair and $Q$ the hyperfocal subgroup of the block $c$ with respect to $(D, f)$.
In both Case (i) and Case (ii) of Theorem \ref{2-block-abel1}, the hyperfocal subgroup $Q$
is elementary abelian.
In Case (iii) of Theorem \ref{2-block-abel1},
by \cite[Proposition 2.7(iii)]{EL1}, $Q$ is either Klein four group or trivial.
Since $Q$ is isomorphic to $C_{2^{m_1}} \times C_{2^{m_2}} \times \cdots \times C_{2^{m_r}}$, where $m_i \geq 2$ for all $i$, by Theorem \ref{2-block-abel1},
$c$ is nilpotent covered.
\end{proof}

\begin{prop}\label{Hy-PH}
Let $H$ be a finite quasi-simple group, $\widetilde{H}$ a finite group containing $H$ as a normal subgroup,
and $\tilde{c}$ a block of $\widetilde{H}$ with abelian defect group $P$.
Assume that $\widetilde{H} = PH$ and $P$ is isomorphic to $C_{2^{n_1}} \times C_{2^{n_2}} \times \cdots \times C_{2^{n_t}}$, where $n_i \geq 2$ for all $i$.
Then $\tilde{c}$ is inertial.
\end{prop}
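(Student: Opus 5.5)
Our plan is to show that the block $c$ of $H$ covered by $\tilde c$ is nilpotent covered, and then deduce that $\tilde c$ is inertial. Let $c$ be the block of $H$ covered by $\tilde c$. Since $\widetilde H/H$ is a $2$-group, $c$ is $\widetilde H$-stable, and $\tilde c$ is the unique block of $\widetilde H$ covering $c$. The defect group $D=P\cap H$ of $c$ is abelian.

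\textbf{Step 1: the hyperfocal subgroup of $c$.} Choose a maximal $\tilde c$-Brauer pair $(P,e_P)$ and a compatible maximal $c$-Brauer pair $(D,f)$. Because $P$ is abelian, the fusion system of $\tilde c$ is controlled by $E=\mathrm{N}_{\widetilde H}(P,e_P)/\C_{\widetilde H}(P)$, which is a $2'$-group. Hence the hyperfocal subgroup of $\tilde c$ is $[P,E]$. By coprime action on the abelian $2$-group $P$,
\[
P=[P,E]\times \C_P(E).
\]

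\textbf{Step 2: compare with the hyperfocal subgroup of $c$.} The hyperfocal subgroup $Q$ of $c$ should be contained in $D$ and should coincide with $[P,E]$. The point is that the $2'$-elements acting come from $H\cap \mathrm{N}(P,e_P)$, modulo the $2$-part coming from $P$. To show this I would use the Frattini argument to write $\mathrm{N}_{\widetilde H}(P,e_P)=P\cdot \mathrm{N}_H(P,e_P)$, and then compare $\mathrm{N}_H(D,f)$ with $\mathrm{N}_H(P,e_P)$.

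Once $Q=[P,E]$ is established, $Q$ is a direct factor of $P$. A direct factor of $C_{2^{n_1}}\times\cdots\times C_{2^{n_t}}$ with all $n_i\ge 2$ is again a product of cyclic groups of order at least $4$ (Krull--Schmidt for abelian groups). So Lemma~\ref{Hy-H} applies, unless $Q=1$, and shows that $c$ is nilpotent covered. If $Q=1$, then $c$ is nilpotent, hence nilpotent covered as well.

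\textbf{Step 3: conclude that $\tilde c$ is inertial.} If $c$ is nilpotent covered, there is a group $\hat H\supseteq H$ with $\hat H/H$ a $2$-group and a nilpotent block $\hat c$ covering $c$. By Puig's results, blocks covering a nilpotent-covered block of a normal subgroup of $2$-power index are inertial; this is the content of the Puig--Zhou result \cite[Corollary 1.13]{ZZ}. I expect this to apply to $\widetilde H=PH$ and the block $\tilde c$. If that does not work directly, I would embed both $\widetilde H$ and $\hat H$ into a common overgroup using Lemma~\ref{p-group-extension}, and then invoke \cite[Theorem 3.13]{P2}, as in the proof of Proposition~\ref{Inertial-PE(G)}.

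\textbf{Main obstacle.} I expect the hardest step to be Step 2: identifying the hyperfocal subgroup of $c$ with a direct factor of $P$ of the right shape. The elements of $\mathrm{N}_H(D,f)$ also act on $D$, and one must check that this action matches the action of $E$ on $[P,E]$. My first attempt would be to show that $[P,E]\le D$, which holds since $E$ is realized by elements of $H$ modulo $P$. Then $[D,E]=[P,E]$, and coprime action restricted to $D$ gives the identification.
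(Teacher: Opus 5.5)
Your Steps 1 and 2 follow the paper's Lemma~\ref{Nil-cov}. That lemma uses Watanabe's decomposition $P=[P,E]\times\C_P(E)$, notes that a direct factor of $P$ again has all cyclic factors of order at least $4$, and applies Lemma~\ref{Hy-H}. Your sketch does not actually prove that $[P,E]$ is the hyperfocal subgroup of $c$. The equality $[D,E]=[P,E]$ only concerns the action of $\mathrm{N}_H(P,e_P)$, whereas the hyperfocal subgroup of $c$ is $[D,\mathrm{N}_H(D,f)]$. You would still have to compare $\mathrm{N}_H(D,f)/\C_H(D)$ with the image of $E$, including showing that $\mathrm{N}_H(P,e_P)$ stabilizes $f$. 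The paper takes this from \cite[Proposition 2.5]{EL1}, and you can cite it too.

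The genuine gap is Step 3. There is no general theorem saying that if $c$ is nilpotent covered and $\widetilde H/H$ is a $2$-group, then the block $\tilde c$ of $\widetilde H$ covering $c$ is inertial. If there were, the proposition would follow at once from Lemma~\ref{Nil-cov}, and the paper's case analysis would be unnecessary. Puig's theorem on nilpotent covered blocks makes $c$ inertial, not $\tilde c$. To apply it to $\tilde c$ you must exhibit a nilpotent block of some group that contains (a quotient of) $\widetilde H$ as a normal subgroup and covers $\tilde c$. As used in Corollary~\ref{Compenent-inertial}, \cite[Corollary 1.13]{ZZ} concerns normal subgroups of $p'$-index, so it does not provide this.

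Your fallback does not produce such a block either. Lemma~\ref{p-group-extension} needs $\phi_2(G_2)\le\phi_1(G_1)$, and nothing forces the automorphisms of $H$ induced by $P$ to lie in the image of an arbitrary $\hat H$ witnessing nilpotent coverage. Even if $\widetilde H$ embedded in $\hat H$, it need not be normal there. Nothing guarantees that the block of its normalizer covering $\tilde c$ is nilpotent.

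The paper fills this gap with the classification. For sporadic, alternating, Tits and defining-characteristic groups, \cite{MNS} shows that the nilpotent covered block $c$ is in fact nilpotent, hence so is $\tilde c$. For Lie type in odd characteristic, \cite{EKK} together with the hyperfocal argument shows that either $b$ or the regular-embedding block $\tilde b$ of $\widetilde G$ is nilpotent. The hyperfocal argument rules out the Klein four case because $P$ has no factor $C_2$. When $\tilde b$ is nilpotent, the overgroup is built explicitly:
\begin{enumerate}
\item Since $\widetilde G/G$ is abelian, the $2'$-part $\widetilde{\mathcal K}$ is $\mathcal P$-stable.
\item By \cite[Lemma 2.2]{KM}, the blocks of $\widetilde{\mathcal K}$ covering $c'$ are nilpotent and odd in number, so $\mathcal P$ fixes one of them.
\item The block of $\mathcal P\widetilde{\mathcal K}$ over it is nilpotent.
\item Passing to $\mathcal L=\mathcal H\,\mathrm{N}_{\mathcal P\widetilde{\mathcal K}}(\mathcal P)$, in which $\mathcal P\mathcal H$ is normal, gives a nilpotent block covering $\tilde c'$, so \cite[Theorem 3.13]{P2} applies.
\end{enumerate}
This fixed-point argument on a specific overgroup is the idea your proposal is missing.
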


We divide the proof of Proposition \ref{Hy-PH} into Lemma \ref{Nil-cov}--\ref{Lie-iner}.

\begin{lem}\label{Nil-cov}
Let $c$ be the block of $H$ covered by $\tilde{c}$.
Then $c$ is nilpotent covered.
\end{lem}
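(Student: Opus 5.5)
We want to show that $c$ is nilpotent covered. The plan is to apply Lemma \ref{Hy-H} to $c$, so the main task is to identify a hyperfocal subgroup of $c$ and show it has the shape $C_{2^{m_1}}\times\cdots\times C_{2^{m_r}}$ with all $m_i\geq 2$ (or is trivial, in which case $c$ is nilpotent and hence nilpotent covered).

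First I set up the defect groups. Since $\widetilde H=PH$, the quotient $\widetilde H/H$ is a $2$-group, so $\tilde c$ is the unique block covering $c$ and $c$ is $\widetilde H$-stable. By \cite[Theorem 9.26]{N1}, $D=P\cap H$ is a defect group of $c$, and $D$ is abelian. Choose a maximal $\tilde c$-Brauer pair $(P,e_P)$ and a compatible maximal $c$-Brauer pair $(D,f)$. Because $D$ is abelian, the fusion system of $c$ is controlled by $E=N_H(D,f)/C_H(D)$, which is a $2'$-group. The hyperfocal subgroup is then $Q=[D,E]$, and coprime action gives $D=[D,E]\times C_D(E)$.

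The key step is to compare $Q$ with the hyperfocal subgroup of $\tilde c$. The fusion of $\tilde c$ is governed by the $2'$-group $\tilde E=N_{\widetilde H}(P,e_P)/C_{\widetilde H}(P)$, and $\tilde E$ acts on $P$. By the Frattini argument, $N_{\widetilde H}(D,f)=H\cdot N_{N_{\widetilde H}(D,f)}(P,e_P)$ up to the usual identifications, so the $2'$-part of the action on $D$ comes from $E$. In particular $[D,E]=[P,\tilde E]$, which is the hyperfocal subgroup of $\tilde c$. This equality is the point I expect to need the most care: one has to match $E$ with the restriction to $D$ of elements of $N(P,e_P)$ and use that $P/D$ is centralized modulo $D$ by $2'$-elements. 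I would argue this via $[P,\tilde E]=[P,\tilde E,\tilde E]\subseteq[D,\tilde E]$, since $\tilde E$ acts trivially on $P/D\cong\widetilde H/H$ (the action of $P$ on $\widetilde H/H$ is inner). Then I would lift each $2'$-element of $N_{\widetilde H}(D,f)$ acting on $D$ to an element of $H$ modulo $C_{\widetilde H}(D)$.

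Once $Q=[P,\tilde E]$ is established, I use the structure of $P$. By coprime action, $P=[P,\tilde E]\times C_P(\tilde E)$, so $Q$ is a direct factor of $P$. Since $P\cong C_{2^{n_1}}\times\cdots\times C_{2^{n_t}}$ with all $n_i\geq 2$, the Krull–Schmidt (uniqueness of cyclic decomposition) theorem for finite abelian groups shows that every direct factor has the form $C_{2^{m_1}}\times\cdots\times C_{2^{m_r}}$ with each $m_j$ among the $n_i$, hence $m_j\geq 2$. If $Q=1$, then $c$ is nilpotent. Otherwise Lemma \ref{Hy-H} applies to $c$ with hyperfocal subgroup $Q$, and so $c$ is nilpotent covered.
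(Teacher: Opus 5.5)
This is essentially the paper's proof: there, the hyperfocal subgroup $Q$ of $\tilde c$ is shown to be a direct factor of $P$ via \cite[Theorem 1(iii)]{W2} (your coprime-action decomposition $P=[P,\tilde E]\times \C_P(\tilde E)$). The paper then quotes \cite[Proposition 2.5]{EL1} for $Q$ also being a hyperfocal subgroup of $c$ (the comparison $[D,E]=[P,\tilde E]$ that you sketch), and finishes with Lemma \ref{Hy-H}. One slip in your sketch: the Frattini identity should read ${\rm N}_{\widetilde H}(D,f)=\C_H(D)\,{\rm N}_{\widetilde H}(P,e_P)$ rather than involving $H$ (since $H\not\leq {\rm N}_{\widetilde H}(D,f)$ in general), and with this corrected identity, together with the fact that $2'$-elements of $\widetilde H$ lie in $H$, both inclusions between $[D,E]$ and $[D,\tilde E]$ go through.
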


\begin{proof}
Let $(P,e)$ be a maximal $\tilde{c}$-Brauer pair  and let $Q$ be the hyperfocal subgroup of $\tilde{c}$ with respect to $(P,e)$.
If $Q=1$, then the block $\tilde{c}$ is nilpotent.
Assume that $Q$ is non-trivial.
By \cite[Theorem 1(iii)]{W2}, we have $P=Q\times R$  where $R=\C_P({\rm N}_{\widetilde{H}}(P,e))$.
By our assumption on $P$, $Q$ is isomorphic to  $C_{2^{m_1}}\times C_{2^{m_2}}\times...\times C_{2^{m_r}}$ where $1\leq r\leq t$ and $m_i\geq 2$ for all $i$.
By \cite[Proposition 2.5]{EL1}, $Q$ is also a hyperfocal subgroup of $c$.
By Lemma \ref{Hy-H}, $c$ is nilpotent covered.
\end{proof}

\begin{lem}\label{Spo-sim}
Assume that $H/Z(H)$ is a sporadic group, or the Tits group ${}^2F_4(2)^{\prime}$, or an Alternating group,
  or a simple group of Lie type with the defining characteristic 2.
Then $\tilde{c}$ is nilpotent.
\end{lem}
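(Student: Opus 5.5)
The plan is to reduce to showing that the hyperfocal subgroup $Q$ of $\tilde c$ with respect to a maximal Brauer pair $(P,e)$ is trivial. By Puig's hyperfocal theorem, $\tilde c$ is nilpotent if and only if $Q=1$. As in the proof of Lemma \ref{Nil-cov}, if $Q\neq 1$ then \cite[Theorem 1(iii)]{W2} gives $P=Q\times R$. By the shape of $P$, this forces $Q\cong C_{2^{m_1}}\times\cdots\times C_{2^{m_r}}$ with $r\ge 1$ and all $m_i\ge 2$. By \cite[Proposition 2.5]{EL1}, $Q$ is also a hyperfocal subgroup of the covered block $c$ of $H$, and $c$ has abelian defect group $D=P\cap H$. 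So it suffices to prove the following: for $H/Z(H)$ in the listed families, no block of $H$ with abelian defect group has a nontrivial hyperfocal subgroup that is homocyclic-free of exponent-$2$ factors, i.e.\ one with all cyclic factors of order at least $4$.

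We then use Theorem \ref{2-block-abel1} in the form refined in \cite{EKK}, which handles the given simple types explicitly.

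\textbf{Sporadic groups and ${}^2F_4(2)'$.} The 2-blocks with abelian defect groups are known. The non-nilpotent ones are the principal block of $J_1$ (defect group $C_2^3$), the $Co_3$ block with defect group $C_2^3$, and blocks with Klein four defect groups, or blocks Morita equivalent to these. In each case the hyperfocal subgroup is elementary abelian, so it cannot have a factor $C_{2^m}$ with $m\ge2$.

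\textbf{Defining characteristic $2$.} Blocks of positive defect are of full defect, so $D$ is a Sylow $2$-subgroup. Abelian Sylow $2$-subgroups occur only for $A_1(2^a)$, where the Sylow subgroup is elementary abelian, and for a few small exceptional isomorphism cases. Again $Q$ is elementary abelian.

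\textbf{Alternating groups (and their covers).} Abelian 2-defect groups of blocks of $A_n$ are small: $1$, $C_2\times C_2$, or blocks of weight $1$. For the double covers, blocks with abelian defect groups have small defect and are covered by the Klein-four or nilpotent cases of Theorem \ref{2-block-abel1}(iii),(iv). In all cases the hyperfocal subgroup is elementary abelian or trivial.

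So in every case $Q$ is elementary abelian. This contradicts $m_i\ge 2$ unless $Q=1$, and therefore $\tilde c$ is nilpotent.

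The main obstacle is justifying, case by case, that for these families Theorem \ref{2-block-abel1} never produces case (iv) with a non-elementary-abelian hyperfocal subgroup. Lemma \ref{Nil-cov} already gives that $c$ is nilpotent covered, but nilpotent covered alone does not give $\tilde c$ nilpotent. I would first appeal directly to \cite[Sections 4--5]{EKK}, where for these families the nilpotent covered blocks with abelian defect groups are shown to be nilpotent. Then $Q\cap H$ is trivial, and since $Q\le [P,{\rm N}_{\widetilde H}(P,e)]$ lies in the focal subgroup of $c$ via \cite[Proposition 2.5]{EL1}, we get $Q=1$.
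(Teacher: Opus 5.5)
Your reduction is sound. The block $\tilde c$ is nilpotent if and only if its hyperfocal subgroup $Q$ is trivial. By \cite[Theorem 1(iii)]{W2}, a nontrivial $Q$ is a direct factor of $P$ whose cyclic factors all have order at least $4$. By \cite[Proposition 2.5]{EL1}, $Q$ is also a hyperfocal subgroup of $c$. Your final step ($c$ nilpotent $\Rightarrow Q=1 \Rightarrow \tilde c$ nilpotent) is also correct, and it spells out the paper's ``then $\tilde c$ is nilpotent too''.

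\textbf{The gap.} It sits exactly at the step you call the main obstacle: for these families, you must rule out non-nilpotent blocks of type (iv) in Theorem \ref{2-block-abel1} whose hyperfocal subgroup is not elementary abelian. Your case list does not prove this; it asserts it (``the 2-blocks with abelian defect groups are known; the non-nilpotent ones are\dots''). For the sporadic groups and their covers, that assertion is the whole content of the claim, and you give no source for it. The source you do propose, \cite[Sections 4--5]{EKK}, treats groups of Lie type in non-defining (odd) characteristic. That is precisely where the paper uses \cite[Lemma 4.2, Proposition 5.3]{EKK}, in Lemma \ref{Lie-nil}. It says nothing about sporadic, alternating, Tits, or characteristic-$2$ groups. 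So the decisive step is stated rather than proved. A smaller inaccuracy: blocks in case (iii) need not be Morita equivalent to Klein-four blocks. What you actually need there is \cite[Proposition 2.7(iii)]{EL1}.

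\textbf{The missing idea.} Use the conclusion of Lemma \ref{Nil-cov} directly. The block $c$ is covered by a nilpotent block of some overgroup of $H$, and that block has a unique irreducible Brauer character. Every Brauer character of $c$ lies under it, so Clifford theory makes $\IBr(c)$ a single $\Aut(H)_c$-orbit. For exactly the families in the statement, \cite[Theorem 2.8, Propositions 2.6 and 3.1]{MNS} show that such a block is nilpotent. Hence $c$ is nilpotent, and your hyperfocal argument then gives $Q=1$. With this input, no case-by-case classification of abelian defect groups is needed.
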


\begin{proof}
Let $c$ be the block of $H$ covered by $\tilde{c}$. By
Lemma \ref{Nil-cov}, the set of irreducible Brauer characters ${\rm IBr}(c)$ is a single ${\rm Aut}(H)_c$-orbit,
where ${\rm Aut}(H)_c$ is the stabilizer of $c$ in ${\rm Aut}(H)$.
By \cite[Theorem 2.8 and Proposition 2.6 and 3.1]{MNS}, $c$ is nilpotent.
Then $\tilde{c}$ is nilpotent too.
\end{proof}

Since we are assuming $p=2$ and ${\rm IBr}(c)$ is a single ${\rm Aut}(H)_c$-orbit (see \ref{Nil-cov}), by \cite[Proposition 2.11]{MNS}, a finite group $H$ of Lie type has no exceptional covering group with faithful block with non-central defect group. So,
in order to prove Proposition \ref{Hy-PH}, by Lemma \ref{Spo-sim}, we may assume that
there exists a simple linear algebraic group $\mathbf{G}$ of simply connected type over an algebraically closed field of odd characteristic with a Steinberg endomorphism $F: \mathbf{G} \rightarrow \mathbf{G}$
such that, denoting by $G$ the group $\mathbf{G}^F$ of fixed points, $H$ is equal to $G/Z$ for some central subgroup $Z$ of $G$.
Let $b$ be the block of $G$ dominating $c$.
By Lemma \ref{Nil-cov}, $\IBr(c)$ is a single $\Aut(H)_{c}$-orbit.
By \cite[Corollary B.8]{N2},
we have $\Aut(H) = \Aut(G)_Z$.
So $\IBr(b)$ is a single $\Aut(G)_{b}$-orbit too.

Let $(\mathbf{G}^*, F^*)$ be a dual pair of $(\mathbf{G}, F)$ (see \cite[P.118]{CE}).
By \cite[Theorem 9.12]{CE}, there exists a semisimple $2'$-element $s$ in $\mathbf{G}^{*F^*}$
such that the intersection $\Irr(b) \cap \mathcal{E}(G, s)$ is nonempty, where $\mathcal{E}(G, s)$ denotes the rational series associated to $s$.
Let $\iota: \mathbf{G} \hookrightarrow \widetilde{\mathbf{G}}$ be a regular embedding,
and let $\iota^*$ be the dual map $\widetilde{\mathbf{G}}^* \rightarrow \mathbf{G}^*$ induced by $\iota$ (see \cite[Section 15.1]{CE}). Identifying $\mathbf{G}$ with a subgroup of $\widetilde{\mathbf{G}}$ via $\iota$, we have $\widetilde{\mathbf{G}} = \mathbf{G} \Z(\widetilde{\mathbf{G}})$.
The Steinberg endomorphism $F$ on $\mathbf{G}$ can be extended to a Steinberg endomorphism on $\widetilde{\mathbf{G}}$, denoted still by $F$. Set $\widetilde{G} = \widetilde{\mathbf{G}}^F$.
By \cite[Lemma 1.7.12 and Proposition 15.6]{CE}, there exists a semisimple $2'$-element $\tilde{s}$ of $\widetilde{\mathbf{G}}^{*F^*}$ such that $s = \iota^*(\tilde{s})$ and such that
there exists a block $\tilde{b}$ of $\widetilde{G}$ covering $b$
such that
the intersection $\Irr(\tilde{b}) \cap \mathcal{E}(\widetilde{G}, \tilde{s})$ is non-empty.

\begin{lem}\label{Lie-nil}
Keep the notation and the assumptions as above.
Then either $b$ or $\tilde{b}$ is nilpotent.
\end{lem}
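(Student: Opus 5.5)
We use the Bonnafé--Dat--Rouquier reduction together with the fact, already established, that $\IBr(b)$ is a single $\Aut(G)_b$-orbit.

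Let $\mathbf{L}^*$ be the minimal $F^*$-stable Levi subgroup of $\mathbf{G}^*$ containing $\C_{\mathbf{G}^*}(s)^\circ$, and let $\tilde{\mathbf{L}}^*$ be its preimage in $\widetilde{\mathbf{G}}^*$. Since $\Z(\widetilde{\mathbf{G}})$ is connected, $\C_{\widetilde{\mathbf{G}}^*}(\tilde s)$ is connected. By Bonnafé--Dat--Rouquier, $\tilde b$ is then Morita equivalent, via a basic (splendid) equivalence preserving defect groups and fusion, to a block $\tilde b_L$ of $\tilde L=\tilde{\mathbf{L}}^F$ lying in $\mathcal{E}_2(\tilde L,\tilde s)$. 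In this block $\tilde s$ is central, so $\tilde b_L$ is a twist of a unipotent $2$-block of $\tilde L$ by a linear character.

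\textbf{Step 1 (quasi-isolated case).} First suppose $s$ is quasi-isolated in $\mathbf{G}^*$. The blocks of $G$ in $\mathcal{E}_2(G,s)$ are classified by Enguehard (and Kessar--Malle). For $p=2$, the only such blocks for which $\IBr(b)$ is a single $\Aut(G)_b$-orbit are those where $b$ itself is nilpotent. This is essentially \cite[Section 3]{MNS}, and I would cite the relevant statements there (their Propositions on groups of Lie type in non-defining characteristic). More precisely, unipotent $2$-blocks of $G$ with abelian defect are very restricted, and in the remaining quasi-isolated series the single-orbit condition forces $|\IBr(b)|$ to be small, which yields nilpotency. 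So in this case $b$ is nilpotent.

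\textbf{Step 2 (non quasi-isolated case).} Now suppose $s$ is not quasi-isolated. Then $\mathbf{L}^*$ is a proper Levi subgroup. By Bonnafé--Dat--Rouquier, $\tilde b\sim \tilde b_L$, and $\tilde b_L$ corresponds to a unipotent block of $\tilde L$ tensored with a linear character. Unipotent blocks of $\tilde L$ are governed by unipotent blocks of $[\tilde L,\tilde L]$, which is a product of smaller groups of Lie type of odd characteristic. The abelian defect group and the single-orbit property transfer along the Morita equivalence, since the numbers of Brauer characters are preserved. Choosing $\mathbf{L}^*$ minimal, we reduce to the quasi-isolated case inside $\tilde L$, and there $\tilde b_L$ is nilpotent by Step 1 applied to the simple factors. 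Since Bonnafé--Dat--Rouquier preserves the fusion system, $\tilde b$ is nilpotent.

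\textbf{Main obstacle.} The main obstacle is the single-orbit condition. It is formulated for $b$, a block of $G$, but it must be passed to $\tilde b$ and then to $\tilde L$. My plan is to use that $\tilde G/G\Z(\tilde G)$ is abelian of order dividing $|\Z(G)|$, together with Clifford theory. If $\tilde b$ covers $b$ and $\IBr(b)$ is one $\Aut(G)_b$-orbit, then either $\IBr(b)$ is already a single $\tilde G$-orbit, so that $\tilde b$ has a single Brauer character and is nilpotent by the abelian-defect/$l(\tilde b)=1$ criterion (Puig--Watanabe type results with a control-of-fusion argument), or the orbit requires field or graph automorphisms, which forces a stable situation in which $b$ itself has $l(b)=1$ and is nilpotent. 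This dichotomy is what produces ``either $b$ or $\tilde b$'' in the statement. I expect verifying that $l=1$ implies nilpotency here to be the delicate point. For that I would use that abelian $2$-blocks with $l=1$ have inertial quotient a $2'$-group acting so that $k(B)$ counts match, and I would invoke \cite[Theorem 2.8]{MNS} again.
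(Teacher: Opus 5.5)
\textbf{There is a genuine gap: your case division rests on claims that fail.}

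Step~1 asserts that for quasi-isolated $s$ the single-orbit condition forces $b$ itself to be nilpotent. This is false even under all the standing hypotheses. Take $H=\widetilde H=G=SL_3(13)$, and let $s$ be the image in $PGL_3(13)$ of $\tilde s=\mathrm{diag}(1,\omega,\omega^2)$, with $\omega$ a primitive cube root of unity.
\begin{itemize}
\item $\C_{\mathbf G^*}(s)=\mathbf T^*\rtimes C_3$ lies in no proper Levi subgroup, since a $3$-cycle lies in no proper parabolic subgroup of $S_3$. So $s$ is quasi-isolated.
\item The corresponding block $b$ has maximal Brauer pair $(T_2,e)$, with $T$ the diagonal torus. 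Its defect group is $C_4\times C_4$ and its inertial quotient is the stabilizer $C_3$ of $s$ in $S_3$, so $b$ is not nilpotent.
\item On the other hand $\C_{\tbG^*}(\tilde s)$ is the diagonal torus of $GL_3$, so $\tilde b$ is nilpotent and $\IBr(b)$ is a single $GL_3(13)$-orbit.
\end{itemize}
This is exactly why the statement reads \emph{either $b$ or $\tilde b$}. The correct dividing line is whether $\C_{\tbG^*}(\tilde s)$ is a torus, not whether $s$ is quasi-isolated.

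Step~2 also does not go through, for three reasons.
\begin{itemize}
\item The Bonnaf\'e--Dat--Rouquier equivalence preserves $l$, but you give no reason why the $\Aut(G)_b$-orbit condition should pass to $\tilde L$ or its simple factors.
\item $\tilde s$ is central in $\tilde{\mathbf L}^*$ only when $\C_{\tbG^*}(\tilde s)$ is itself a Levi subgroup.
\item $b$ need not have abelian defect groups at all, since only $c$, a block of $H=G/Z$, is assumed to. Compare $SL_2(q)\to PSL_2(q)$ with $q\equiv\pm3\pmod 8$, where $Q_8$ maps onto $C_2\times C_2$.
\end{itemize}
Finally, your fallback \emph{$l=1$ implies nilpotent} is false for abelian $2$-groups, even of the shape in question. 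Take $(C_4\times C_4)^2\rtimes 3^{1+2}_+$, where $3^{1+2}_+$ acts through $C_3\times C_3$ and each factor acts fixed-point-freely on one copy of $C_4\times C_4$. A block lying over a faithful character of $\Z(3^{1+2}_+)$ has defect group $C_4^4$, inertial quotient $C_3\times C_3$, and exactly one Brauer character.

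\textbf{The missing idea is the hypothesis on $P$.} You never use that $P\cong C_{2^{n_1}}\times\cdots\times C_{2^{n_t}}$ with all $n_i\ge 2$, yet this is what the paper's proof runs on; the single-orbit condition is not used there at all. The paper argues as follows.
\begin{enumerate}
\item It rules out ${}^2G_2(q)$: its Sylow $2$-subgroups are elementary abelian of order $8$, so they cannot contain a nontrivial hyperfocal subgroup of the required shape. Hence the set-up of \cite[Sections 4 and 5]{EKK} applies.
\item If $\C_{\tbG^*}(\tilde s)$ is a torus, \cite[Lemma 4.2]{EKK} gives that $\tilde b$ is nilpotent.
\item Otherwise $\C^\circ_{\mathbf G^*}(s)$ is not a torus. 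Then \cite[Proposition 5.3]{EKK}, together with the last paragraph of the proof of \cite[Theorem 6.1]{EKK}, leaves two possibilities: $b$ is nilpotent, or $c$ is of the Klein-four type of Theorem \ref{2-block-abel1}(iii).
\item In the Klein-four case the hyperfocal subgroup of $c$ is $C_2\times C_2$. By \cite[Proposition 2.5]{EL1} it is also the hyperfocal subgroup of $\tilde c$. Watanabe's theorem \cite[Theorem 1(iii)]{W2} then gives $P\cong (C_2\times C_2)\times R$, contradicting $n_i\ge 2$.
\end{enumerate}
Your sketch has no mechanism for excluding these Klein-four blocks.
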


\begin{proof}
We apply \cite[Proposition 5.3]{EKK} to prove this lemma. We remind that \cite[Chapter 5]{EKK} retains the assumptions of \cite[Chapter 4]{EKK}, which excludes the Suzuki or Ree groups.
Suppose that $H/\Z(H) \cong {}^2G_2(3^{2n+1})$ for some $n \geq 1$. Then $H = H/\Z(H)$ and Sylow 2-subgroups of $H$ are
elementary abelian of order $8$. According to the proof of Lemma \ref{Nil-cov}, $H$ has a 2-subgroup $Q$ isomorphic to  $C_{2^{m_1}}\times C_{2^{m_2}}\times...\times C_{2^{m_r}}$ where $1\leq r\leq t$ and $m_i\geq 2$ for all $i$. This causes a contradiction.
Therefore, $H/\Z(H)$ is neither a Suzuki group nor a Ree group.

Assume that $\C_{\tbG^{*}}(\tilde{s})$ is a torus. By \cite[Lemma 4.2]{EKK}, $\tilde{b}$ is nilpotent in this case.

Assume that $\C_{\tbG^{*}}(\tilde{s})$ is not a torus. Then $\C^{\circ}_{\bf{G}^{*}}(s)$ is not a torus and now we are in the situation of \cite[Proposition 5.3]{EKK}.
By the last paragraph of the proof of \cite[Theorem 6.1]{EKK}, either $b$ is nilpotent or $c$ is  as stated  in Theorem \ref{2-block-abel1}(iii).
Suppose that $c$ is non-nilpotent and as stated  in  Theorem \ref{2-block-abel1}(iii). As we see in the proof of Lemma \ref{Hy-H}, the hyperfocal subgroup $Q$ of $c$ is a Klein four group,
which, by \cite[Proposition 2.5]{EL1}, is also a hyperfocal subgroup of $\tilde{c}$.
But by \cite[Theorem 1(iii)]{W2}, the defect group $P$ of $\tilde{c}$ is isomorphic to a direct product of $Q$ and an abelian $2$-group. This
contradicts the assumption on $P$ in Proposition \ref{Hy-PH}.
So $c$ is nilpotent and so is $b$.
\end{proof}


\begin{lem}\label{Lie-iner}
Keep the notation and the assumptions as above.
If $\tilde{b}$ is nilpotent, then $\tilde{c}$ is inertial.
\end{lem}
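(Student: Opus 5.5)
We transport nilpotency of $\tilde b$ down to $H=G/Z$ and then up to $\widetilde H=PH$, relating the three groups through the common overgroup $\widetilde G$ and using that inertiality passes to normal subgroups of $p'$-index and through central quotients. The target is a statement of the form ``$\tilde c$ is covered by (or is a block of a group sandwiched inside) a block Morita-equivalent, by an endopermutation-source bimodule, to a nilpotent block extended by a $p'$-group.''

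\emph{Step 1: $b$ is inertial as a block of $G$.} The quotient $\widetilde G/G\Z(\widetilde G)$ is abelian, and it has odd order in the relevant cases. Indeed, it is isomorphic to a quotient of $\Z(\mathbf G)^F$-related data, and its $2$-part can be controlled as follows. Since $\tilde b$ is nilpotent and covers $b$, and $\Z(\widetilde G)$ is central, I would first pass to $G\Z(\widetilde G)$, using \cite[Corollary 1.14]{P1} to remove the central $2$-part. A block of $G\Z(\widetilde G)$ covered by the nilpotent block $\tilde b$ is again nilpotent, because normal subgroups inherit nilpotency of covered blocks (K\"ulshammer--Puig). Hence $b$ itself is nilpotent, or at least inertial, via the restriction argument together with \cite[Theorem 3.13]{P2}. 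If the index $[\widetilde G: G\Z(\widetilde G)]$ has even part, I would instead use that the block of $G$ covered by a nilpotent block is nilpotent. This always holds, so $b$ is nilpotent.

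\emph{Step 2: from $b$ to $c$ and to $\tilde c$.} Since $Z$ is central in $G$ and $b$ dominates $c$, the block $c$ of $H=G/Z$ is nilpotent: the $2'$-part of $Z$ is harmless, and for the $2$-part we use that dominated blocks of nilpotent blocks are nilpotent. Then $\tilde c$ is a block of $\widetilde H=PH$ covering the nilpotent block $c$, with $\widetilde H/H$ a $2$-group and $P$ abelian. By Puig's theorem on extensions of nilpotent blocks (Puig, ``Nilpotent extensions of blocks''; equivalently \cite[Corollary 1.13]{ZZ} in the abelian case), such a block is inertial. Here I would need $P\cap H$ to be a defect group of $c$, which holds by \cite[Theorem 9.26]{N1}, and $\widetilde H=PH$ with $P$ abelian.

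\emph{Main obstacle.} The delicate point is Step 1: nilpotency must descend from $\tilde b$ on $\widetilde G$ to $b$ on $G$ when $\widetilde G/G$ may have even order. I would rely on the general fact (K\"ulshammer--Puig) that a block covered by a nilpotent block of an overgroup containing it as a normal subgroup is nilpotent. Should that fail to apply cleanly, I would fall back on the statement of Lemma \ref{Lie-nil} and the structure of Brauer pairs of $\tilde b$ restricted to $G$.
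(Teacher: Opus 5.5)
Your Step~1 rests on a false claim: a block of a normal subgroup that is covered by a nilpotent block need not be nilpotent. K\"ulshammer--Puig concerns blocks \emph{covering} a stable nilpotent block. In the direction you need, the correct statement is Puig's theorem, used in the paper as \cite[Theorem 3.13]{P2}: the covered block is only \emph{inertial}.

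Here is a small counterexample with $p=2$. Let $\sigma$ of order $3$ act fixed-point-freely on $V\cong C_2\times C_2$, and on $B=\langle u\rangle\times\langle v\rangle\cong C_3\times C_3$ by $u\mapsto u$, $v\mapsto uv$. Put $G_0=(V\times B)\rtimes\langle\sigma\rangle$. Then $N=V\langle\sigma\rangle\times\langle u\rangle\cong A_4\times C_3$ is normal of index $3$ in $G_0$, and all its $2$-blocks have inertial quotient $C_3$. Yet for $1\neq\lambda\in\Irr(\langle u\rangle)$, the unique block of $G_0$ lying over $\lambda$ has trivial inertial quotient, because $\sigma$ moves every $\theta\in\Irr(B)$ with $\theta(u)\neq 1$. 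Hence that block is nilpotent while the block of $N$ it covers is not.

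Nilpotency can be pushed down through a normal subgroup of $2$-power index; the paper does exactly that and nothing more. It cannot be pushed across a $2'$-index, and $\widetilde G/G\Z(\widetilde G)$ can have a nontrivial odd part, e.g.\ type $A_{n-1}$ with an odd prime dividing $\gcd(n,q-1)$. This is precisely why Lemma~\ref{Lie-nil} distinguishes ``$b$ nilpotent'' from ``$\tilde b$ nilpotent''. In the present lemma neither $b$ nor $c$ is known to be nilpotent, so your Step~2 has no input. Your fallback to Lemma~\ref{Lie-nil} adds nothing, since this lemma is exactly the case where only $\tilde b$ is nilpotent.

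Weakening Step~1 to ``$b$, hence $c$, is inertial'' does not rescue the argument, for two reasons. First, no general result says that a block of $PH$ covering a $P$-stable inertial block of $H$, with $PH/H$ a $2$-group, is inertial; the extension result \cite{Z} is for $p'$-quotients. Second, $P$ may act on $H$ by field or graph automorphisms, so $\widetilde H$ does not sit inside $\widetilde G$ and cannot be compared with $\tilde b$ directly.

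The missing idea is to apply Puig's theorem to $\widetilde H$ itself, inside an overgroup in which (a quotient of) $\widetilde H$ is normal and covered by a nilpotent block. The paper first reduces to $\Ker(c)=1$ and factors out $Z\OO_2(\widetilde G)$. It then proceeds as follows.
\begin{enumerate}
\item Lemma~\ref{p-group-extension} embeds $\widetilde{\mathcal H}=\mathcal P\mathcal H$ in $\mathcal Q\mathcal H\leq\widetilde{\mathcal A}$.
\item $\widetilde{\mathcal K}$ is taken to be the preimage of the $2'$-part of the abelian group $\widetilde{\mathcal G}/\mathcal H$.
\item The blocks of $\widetilde{\mathcal K}$ over $c'$ are nilpotent and odd in number by \cite[Lemma 2.2]{KM}. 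So $\mathcal P$ fixes one of them, $\tilde f'$, which is a nilpotent block of $\mathcal P\widetilde{\mathcal K}$ with abelian defect group $\mathcal P$.
\item The block $e'$ of $\mathcal L=\mathcal H\,{\rm N}_{\mathcal P\widetilde{\mathcal K}}(\mathcal P)$ inducing $\tilde f'$ is nilpotent and covers $\tilde c'$, and $\widetilde{\mathcal H}\unlhd\mathcal L$.
\item Therefore \cite[Theorem 3.13]{P2} shows that $\tilde c'$, hence $\tilde c$, is inertial.
\end{enumerate}
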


\begin{proof} By \cite[Theorem 2.5.1]{GLS}, $\Aut(G)$ is induced by $\widetilde{G} \rtimes \mathcal{D}$, where $\mathcal{D}$ denotes the group generated by suitable
graph and field automorphisms of $G$. Since $H=G/Z$, by \cite[Corollary B.8]{N2},
$\Aut(H)$ is induced by $\widetilde{G} \rtimes \mathcal{D}_{Z}$, where $\mathcal{D}_Z$ denotes the stabilizer of $Z$ in $\mathcal{D}$.

The kernel ${\rm Ker}(c)$ of the block $c$ is a normal $p'$-subgroup of $H$ and it is contained in the center of $H$. Denote by $\bar H$ the quotient group $H/{\rm Ker}(c)$ and by $\bar c$ the image of $c$ in the group algebra ${\cal O} \bar H$. The natural homomorphism $H\rightarrow \bar H$ induces an algebra isomorphism ${\cal O} H c\cong {\cal O}\bar H\bar c$.
Since $\tilde c$ covers $c$ and $\tilde H=PH$, we have ${\rm Ker}(c)= {\rm Ker}(\tilde c)$. In order to prove this lemma, we may assume $\Ker(\tilde{c}) = \Ker(c) = 1$. Then, since the block $b$ dominates the block $c$, we have $\Ker(b)=\OO_{2'}(Z)$ and
$c$ is the image of $b$ in the group algebra ${\cal O} H$.

Set
$
\widetilde{\mathcal{G}} = \widetilde{G}/(Z\OO_2(\widetilde{G}))$,
$\widetilde{\mathcal{A}} = (\widetilde{G} \rtimes \mathcal{D}_{Z})/(Z\OO_2(\widetilde{G}))$ and
$\widetilde{\mathcal{H}} = \widetilde{H}/\OO_2(H)$.
Let $\mathcal{H}$ and $\mathcal{P}$ be the respective images of $H$ and $P$ in $\widetilde{\mathcal{H}}$.
We have $\OO_2(\widetilde{G}) \cap G = \OO_2(G)$, $\OO_2(H)=\OO_2(G)Z/Z$ and isomorphisms
\[
\mathcal{H} \cong H/\OO_2(H) \cong G/(Z\OO_2(G)) \cong (G \OO_2(\widetilde{G}))/(Z\OO_2(\widetilde{G})).
\]
Through these isomorphisms, we identify $\mathcal{H}$ with a subgroup of $\widetilde{\mathcal{A}}$, which is normal in $\widetilde{\mathcal{A}}$.
Let $\mathcal{Q}$ be a Sylow $2$-subgroup of $\widetilde{\mathcal{A}}$ and set $\widetilde{\mathcal{B}} = \mathcal{Q}\mathcal{H}$.
By Lemma \ref{p-group-extension}, we may identify $\widetilde{\mathcal{H}} = \mathcal{P}\mathcal{H}$ with a subgroup of $\widetilde{\mathcal{B}}$.
Since $\widetilde{G}/G$ is abelian (see \cite[\S 15.1]{CE}), so is $\widetilde{\mathcal{G}}/\mathcal{H}$.
Let $\widetilde{\mathcal{K}}$ be the largest normal subgroup of $\widetilde{\mathcal{G}}$ containing $\mathcal{H}$ such that $\widetilde{\mathcal{K}}/\mathcal{H}$ is a $2'$-group and $\widetilde{\mathcal{G}}/\widetilde{\mathcal{K}}$ is a $2$-group.

Let $\tilde{b}'$ be the block of $\widetilde{\mathcal{G}}$ dominated by $\tilde{b}$
and $c'$ the block of $\mathcal{H}$ dominated by $c$.
Since $\tilde{b}$ covers $b$ and $b$ dominates $c$, $\tilde{b}'$ covers $c'$.
Let $\tilde{d}'$ be a block of $\widetilde{\mathcal{K}}$ that covers $c'$ and is covered by $\tilde{b}'$.
Since $\tilde{b}$ is nilpotent, $\tilde{b}'$ is nilpotent and so is $\tilde{d}'$.
Let $\Bl(\widetilde{\mathcal{K}} | c')$ be the set of blocks of $\widetilde{\mathcal{K}}$ covering $c'$.
Since $\widetilde{\mathcal{K}}/\mathcal{H}$ is abelian, by \cite[Lemma 2.2]{KM}, the cardinality of $\Bl(\widetilde{\mathcal{K}} | c')$ is odd
and every block within $\Bl(\widetilde{\mathcal{K}} | c')$ is nilpotent.

The construction of $\widetilde{\mathcal{K}}$ ensures its stability under the action of $\mathcal{P}$.
In particular, $\widetilde{\mathcal{K}}$ is normal in $\mathcal{P}\widetilde{\mathcal{K}}$.
Let $\tilde{c}'$ be the block of $\widetilde{\mathcal{H}}$ dominated by $\tilde{c}$.
Obviously $\mathcal{P}$ is a defect group of $\tilde{c}'$, $\tilde{c}'$ covers $c'$, $\mathcal{P}$ stabilizes $c'$ and the set $\Bl(\widetilde{\mathcal{K}} | c')$ is
$\mathcal{P}$-invariant.
There exists a $\mathcal{P}$-invariant block $\tilde{f}'$ of $\widetilde{\mathcal{K}}$ that covers $c'$.
By \cite[Problems 9.4]{N1},
$\mathcal{P}$ is a defect group of $\tilde{f}'$ as a block of $\mathcal{P}\widetilde{\mathcal{K}}$.

Let $\mathcal{L} = \mathcal{H}{\rm N}_{\mathcal{P}\widetilde{\mathcal{K}}}(\mathcal{P})$ and $e'$ the block of $\mathcal{L}$ such that the induced block $(e')^{\mathcal{P}\widetilde{\mathcal{K}}}$ is equal to $\tilde{f}'$.
Since $\mathcal{P}$ is abelian, the Brauer category of $\tilde{f}'$ as a block of $\mathcal{P}\widetilde{\mathcal{K}}$ is controlled by ${\rm N}_{\mathcal{P}\widetilde{\mathcal{K}}}(\mathcal{P})$.
Since $\tilde{f}'$ is nilpotent as a block of $\mathcal{P}\widetilde{\mathcal{K}}$,
$e'$ is nilpotent.
Clearly, $e'$ covers $c'$, and $\tilde{c}'$ is the unique block of $\widetilde{\mathcal{H}}$ covering $c'$.
Since $\widetilde{\mathcal{H}}$ is normal in $\mathcal{L}$, $e'$ covers $\tilde{c}'$.
By \cite[Theorem 3.13]{P2}, the block $\tilde{c}'$ is inertial and so is $\tilde{c}$.
\end{proof}

\begin{order}\label{proof}{Proof of Theorem \ref{Main}}.
{\rm
Let $G$ be a finite group and $b$ a $2$-block of $G$.
Assume that defect groups of $b$ are isomorphic to $C_{2^{n_1}} \times C_{2^{n_2}} \times \cdots \times C_{2^{n_t}}$, where $t \geq 1$ and $n_i \geq 2$ for all $i$.
By \cite[Proposition 6.1]{A}, we assume that $b$ is reduced.
By Corollary \ref{Compenent-inertial} and Proposition \ref{Hy-PH}, we conclude that $b$ is inertial. }
\end{order}


\end{document}